\numberwithin{equation}{section}
\theoremstyle{plain}
\newtheorem{proposition}{Proposition}[section]
\newtheorem{theorem}[proposition]{Theorem}		
\newtheorem{corollary}[proposition]{Corollary}
\newtheorem{lemma}[proposition]{Lemma}
\theoremstyle{definition}
\newtheorem{definition}[proposition]{Definition}
\newtheorem{remark}[proposition]{Remark}
\newcommand{\SL}{\mathsf{SL}}
\newcommand{\GL}{\mathsf{GL}}
\newcommand{\SU}{\mathsf{SU}}
\newcommand{\U}{\mathsf{U}}
\DeclareMathOperator{\Sec}{Sec}
\DeclareMathOperator{\Jac}{Jac}
\DeclareMathOperator{\Gr}{Gr}
\DeclareMathOperator{\End}{End}
\DeclareMathOperator{\Sing}{Sing}
\DeclareMathOperator{\id}{id}
\DeclareMathOperator{\tr}{tr}
\DeclareMathOperator{\rank}{rank}
\DeclareMathOperator{\vol}{vol}
\DeclareMathOperator{\spanC}{span_\mathbb{C}}
\DeclareMathOperator{\Res}{Res}
\begin{document}


\title{Flow lines on the moduli space of rank $2$ twisted Higgs bundles}	

\author[Graeme Wilkin]{Graeme Wilkin \, \orcidlink{0000-0002-1504-7720}}

\thanks{}

\keywords{Higgs bundles; Morse-Bott complex; secant varieties; gradient flow lines}

\email{graeme.wilkin@york.ac.uk, ORCID: 0000-0002-1504-7720}
\address{Department of Mathematics,
University of York, 
York YO10 5DD, United Kingdom}

\subjclass[2000]{Primary: 58D15; Secondary: 14D20, 32G13}
\date{\today}

\begin{abstract} 
This paper studies the gradient flow lines for the $L^2$ norm square of the Higgs field defined on the moduli space of semistable rank $2$ Higgs bundles twisted by a line bundle of positive degree over a compact Riemann surface $X$. The main result is that these spaces of flow lines have an algebro-geometric classification in terms of secant varieties for different embeddings of $X$ into the projectivisation of the negative eigenspace of the Hessian at a critical point. The Morse-theoretic compactification of spaces of flow lines given by adding broken flow lines then has a natural algebraic interpretation via a projection to Bertram's resolution of secant varieties.
\end{abstract}



\maketitle

\thispagestyle{empty}


\baselineskip=16pt
\setcounter{footnote}{0}


\section{Introduction}

The moduli space of Higgs bundles over a compact Riemann surface admits a natural Morse-Bott function, given by the square of the $L^2$ norm of the Higgs field. This has been instrumental in efforts to understand more about the topology of this space, with an enormous amount of activity beginning with the original work of Hitchin \cite{Hitchin87}. There has been much recent interest in gradient flow lines of this function and their connection with Geometric Langlands, which has focused on understanding the \emph{very stable} and \emph{wobbly} bundles (see \cite{DonagiPantev09}, \cite{FrancoGothenPeonNieto21}, \cite{FrancoGothenOliveiraPeonNieto24}, \cite{HauselHitchin22}, \cite{Laumon88}, \cite{PeonNieto24}).

Motivated by the geometric description of Yang-Mills-Higgs flow lines in \cite{Wilkin20}, the goal of this paper is to give a concrete description of the space of flow lines connecting two critical sets for the function $\| \phi \|_{L^2}^2$. In general, for any Morse-Bott function, the unstable set of a critical set is stratified by the types of the critical sets that can appear as downwards limits of the flow. The main result of this paper is that, for rank $2$ Higgs bundles, this stratification has a geometric interpretation in terms of secant varieties for different embeddings of the underlying Riemann surface into the projectivisation of the negative eigenbundle of the Hessian at each critical point.  Moreover, this geometric description of the flow lines also leads to a simple proof that the function $\| \phi \|_{L^2}^2$ is in fact Morse-Bott-Smale, and therefore one can use the methods of \cite{AustinBraam95} to construct a Morse complex in which the cup product is determined by the topology of the spaces of flow lines.

For rank $2$ Higgs bundles twisted by a line bundle $M$, the nonminimal critical points of $\| \phi \|_{L^2}^2$ have the form $[L_1 \oplus L_2, \phi]$, where $L_1, L_2$ are line bundles with $\deg L_1 > \deg L_2$ and $\phi \in H^0(L_1^*L_2 \otimes M)$. With respect to the downwards gradient flow of $\| \phi \|_{L^2}^2$, the unstable manifold of such a critical point is homeomorphic to $H^1(L_1^*L_2)$, which parametrises extensions $0 \rightarrow L_2 \rightarrow E \rightarrow L_1 \rightarrow 0$. Since $\deg L_1 > \deg L_2$, then there is a canonical embedding $X \hookrightarrow \mathbb{P}H^1(L_1^*L_2)$, and the first main result is that the limit of the downwards flow with initial condition in the unstable manifold of the critical point $[L_1 \oplus L_2, \phi]$ is determined by the secant varieties of $X \hookrightarrow \mathbb{P}H^1(L_1^*L_2)$.

\begin{theorem}[Theorem \ref{thm:space-unbroken-flow-lines}]\label{thm:space-unbroken-flow-lines-intro}
Fix $\rank(E) = 2$, $\deg E = 0 \, \, \text{or} \, \, 1$ and let $C_\ell, C_u$ be two critical sets indexed by $0 \leq \ell < u \leq \frac{1}{2} \deg E + g-1$. Then the space $\mathcal{L}_\ell^u$ of flow lines between $C_\ell$ and $C_u$ is a circle bundle over the $(u-\ell)^{th}$ global secant variety $\mathcal{P}_\ell^u$ from Definition \ref{def:secant-bundle}, where the fibres are the orbits of the $S^1$ action $e^{i \theta} \cdot [E, \phi] = [E, e^{i \theta} \phi]$ on $\mathcal{M}_{Higgs}^{ss}(E)$.
\end{theorem}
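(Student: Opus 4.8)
The plan is to work one critical point at a time and then globalise over $C_u$. Fix $[L_1 \oplus L_2, \phi] \in C_u$ with $\phi \in H^0(L_1^* L_2 \otimes M)$, and use the identification of its unstable set with $H^1(L_1^* L_2)$, a class $e$ corresponding to the extension $0 \to L_2 \to E_e \to L_1 \to 0$ together with the induced Higgs field $\Phi_e$. The first observation I would record is that scaling the extension class, $e \mapsto \lambda e$ for $\lambda \in \mathbb{C}^*$, is realised by the gauge transformation $\mathrm{diag}(\lambda, 1)$ on the associated graded, which conjugates $\Phi_e$ into $\lambda \Phi_e$; hence the $\mathbb{C}^*$-scaling on $H^1(L_1^* L_2)$ coincides with the moduli action $[E,\phi] \mapsto [E, \lambda \phi]$, whose $S^1$-part is the circle action in the statement. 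Since $\| \phi \|_{L^2}^2$ is the moment map for this $S^1$, its downward gradient flow on the unstable set is the $\mathbb{R}_{>0}$-part of this $\mathbb{C}^*$-action, so unparametrised flow lines are the rays through the origin and the downward limit depends only on $[e] \in \mathbb{P} H^1(L_1^* L_2)$.

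The crux is to identify, for each $[e]$, the critical set to which the flow converges. Because the downward limit is the large-$\Phi$ (variation-of-Hodge-structure) fixed point of the $\mathbb{C}^*$-action, it is governed by the maximal $\Phi_e$-invariant sub-line-bundle of $E_e$, and the drop in index equals the degree by which this subbundle raises the destabilising slope. I would then translate this into secant geometry in two steps. A sub-line-bundle of the form $L_1(-D)$, for an effective divisor $D$, lifts the inclusion $L_1(-D) \hookrightarrow L_1$ to $E_e$ precisely when $e$ lies in the kernel of $H^1(L_1^* L_2) \to H^1(L_1^* L_2(D))$, which by the long exact sequence of $0 \to L_1^* L_2 \to L_1^* L_2(D) \to L_1^* L_2(D)|_D \to 0$ is exactly the linear span of $D$ under the embedding, i.e.\ its secant plane. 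Separately, this $L_1(-D)$ is $\Phi_e$-invariant if and only if $\phi$ vanishes along $D$, that is $D \leq \mathrm{div}(\phi)$: this is the computation that the composite $L_1(-D) \hookrightarrow L_1 \xrightarrow{\phi} L_2 \otimes M$ must vanish. Combining the two, the classes flowing to $C_\ell$ are those lying on a secant plane of some degree-$(u-\ell)$ subdivisor of $\mathrm{div}(\phi)$, and lying on no such plane of smaller degree; taking closures absorbs the strata where the points of $D$ collide. This is precisely the locus cut out by the $(u-\ell)^{\mathrm{th}}$ global secant variety $\mathcal{P}_\ell^u$ of Definition \ref{def:secant-bundle}, and the dependence of $\mathrm{div}(\phi)$ on the critical point is what makes it \emph{global} and accounts for the twist by $M$.

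Granting this fibrewise description, the circle-bundle structure is formal. The locus of classes flowing to $C_\ell$ is both $\mathbb{R}_{>0}$- and $S^1$-invariant, so it is a cone, and intersecting with the unit sphere of $H^1(L_1^* L_2)$ and quotienting the radial direction by the flow time exhibits $\mathcal{L}_\ell^u$ as a Hopf-type $S^1$-bundle over the projectivised cone $\mathcal{P}_\ell^u$, with fibres the orbits of $e^{i\theta} \cdot [E,\phi] = [E, e^{i\theta}\phi]$. Finally I would let the critical point vary over $C_u$, checking that the embeddings $X \hookrightarrow \mathbb{P} H^1(L_1^* L_2)$, the divisors $\mathrm{div}(\phi)$, and the secant planes assemble into the bundle $\mathcal{P}_\ell^u$ over $C_u$.

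The main obstacle I anticipate is the exact identification of the downward limit in the second step: passing from the infinitesimal/linearised picture at the critical point to the honest limit of the nonlinear gradient flow, and verifying that the $\Phi_e$-invariant destabilising subbundle computing the large-$\Phi$ fixed point is indeed an $L_1(-D)$ with $D \leq \mathrm{div}(\phi)$, uniformly in $[e]$. Controlling the degenerate secant strata (non-reduced $D$, or $[e]$ sitting on several secant planes at once) so that the closures match the algebraic secant variety, and checking that the fibrewise data glue continuously over $C_u$, is where the substantive work lies; the circle-bundle formalism is then essentially a consequence of the $\mathbb{C}^*$-equivariance.
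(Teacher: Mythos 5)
Your first paragraph and your lifting criterion agree with the paper: scaling the extension class is gauge-equivalent to scaling the Higgs field, so unparametrised flow lines are classified by $[e]\in\mathbb{P}H^1(L_1^*L_2)$, and $L_1(-D)$ lifts to a subsheaf of $E_e$ precisely when $[e]$ lies on the secant plane of $D$. The step that fails is the invariance condition you impose to identify the downward limit: you require the destabilising subbundle $L_1(-D)\subset E_e$ to be $\Phi_e$-invariant and conclude $D\le \mathrm{div}(\phi)$. This is the criterion for the wrong end of the flow line. In Proposition \ref{prop:hausel-hitchin-general}, the nilpotency condition $\phi(E_{k+1})\subset E_k\otimes K$ governs the limit as $\lambda\to\infty$ (part (ii)); the downward limit (part (i)) only requires $\phi(E_k)\subset E_{k+1}\otimes K$, which for a two-step filtration of a rank $2$ bundle is vacuous. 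Hence the downward limit is determined by the Harder--Narasimhan filtration of the underlying bundle $E_e$ alone, with no constraint involving $\mathrm{div}(\phi)$: if $[e]$ lies on $\Pi_D$ and on no secant plane of smaller multiplicity, the flow descends to $[L_1[-D]\oplus L_2[D],\phi_\infty]\in C_{u-\deg D}$, where by Lemma \ref{lem:downwards-flow-criterion} the limiting Higgs field $\phi_\infty$ is the image of $\phi$ under $L_1^*L_2\otimes M\hookrightarrow L_1^*L_2[2D]\otimes M$; the Higgs field \emph{acquires} zeros along $2D$ in the limit rather than being required to have them at the start. (Your divisibility condition is the correct statement at the \emph{bottom} of a flow line, Corollary \ref{cor:upwards-flow-criterion}: flow lines ascend out of $[L_1'\oplus L_2',\phi_\infty]$ only if $\mathrm{div}(\phi_\infty)\ge 2D$, and for limits coming from above this holds automatically; you have transplanted it to the top, where it is false.) Your computation is also internally inconsistent: since $\phi\neq 0$ and $L_1(-D)\hookrightarrow L_1$ is injective, the composite $L_1(-D)\to L_1\xrightarrow{\phi}L_2\otimes M$ is a nonzero map of line bundles for \emph{every} $D$, so $L_1(-D)$ is never $\Phi_e$-invariant; vanishing of $\phi$ along $D$ makes this composite factor through $(L_2\otimes M)(-D)$, it does not kill it.

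The error is not cosmetic. With your criterion, the locus in $\mathbb{P}H^1(L_1^*L_2)$ flowing down to $C_\ell$ would be a finite union of secant planes $\Pi_D$, one for each degree-$(u-\ell)$ subdivisor $D\le\mathrm{div}(\phi)$; this has dimension at most $u-\ell-1$, whereas the fibre $\Sec_{u-\ell}^{L_1^*L_2}(X)$ of the global secant variety $\mathcal{P}_\ell^u$ of Definition \ref{def:secant-bundle} is swept out by the planes of \emph{all} effective divisors of degree $u-\ell$ and has dimension $2(u-\ell)-1$ by Lemma \ref{lem:secant-dimension}. In the extreme case of the critical points studied by Hitchin, where $L_1^2\cong \det E\otimes M$ and $\phi$ is a nowhere-vanishing section of the trivial bundle $L_1^*L_2\otimes M$, your criterion predicts that \emph{no} flow lines descend to any intermediate nonminimal critical set, whereas in fact any $[e]$ on the embedded curve $X\subset\mathbb{P}H^1(L_1^*L_2)$ gives an unstable $E_e$ with HN subbundle $L_1[-p]$ whose graded Higgs pair lies in $C_{u-1}$, so $\mathcal{L}_{u-1}^u$ is a nonempty circle bundle over a curve bundle over $C_u$. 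So the theorem as stated would not follow from your argument. The rest of your outline (the cone/Hopf circle-bundle structure and the globalisation over $C_u$) is sound once the fibrewise identification is corrected to the paper's: the secant-plane lifting criterion (Lemma \ref{lem:flow-lines-secant}) combined with the explicit gauge-theoretic computation of the downward limit (Lemma \ref{lem:downwards-flow-criterion}), with no invariance hypothesis anywhere.
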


\begin{remark}
The notation $\mathcal{L}_\ell^u$ is used for the space of flow lines and $\mathcal{F}_\ell^u$ is used for the space of points that flow up to $C_u$ and down to $C_\ell$ (cf. \eqref{eqn:R-bundle-flow-lines} and \eqref{eqn:unbroken-flow-lines}). The two spaces are related by $\mathcal{L}_\ell^u := \mathcal{F}_\ell^u / \mathbb{R}$, where $\mathbb{R}$ acts by time translation along a flow line.
\end{remark}

Therefore we have a parametrisation of the \emph{unbroken} flow lines connecting two critical sets, however to construct a Morse-Bott complex on the moduli space one needs to prove that the function satisfies the stronger Morse-Bott-Smale condition. Theorem \ref{thm:space-unbroken-flow-lines-intro} leads to a simple proof of the Morse-Bott-Smale property (see Proposition \ref{prop:morse-bott-smale}), which has the following consequences for the cup product on the Morse complex.

There are canonical projections $\pi_\ell : \mathcal{F}_\ell^u \rightarrow C_\ell$ and $\pi_u : \mathcal{F}_\ell^u \rightarrow C_u$ given by taking the limit of the flow as $t \rightarrow \pm \infty$. Similarly, there is canonical projection $p_u : \mathcal{P}_\ell^u \rightarrow C_u$ (from Definition \ref{def:secant-bundle}) as well as a projection $p_\ell : \mathcal{P}_\ell^u \rightarrow C_\ell$ taking a point in a secant plane to the limiting Higgs pair from Section \ref{subsec:explicit-gauge}.

\begin{equation}\label{eqn:projections-factor}
\begin{tikzcd}
 & \arrow[bend right=10]{ddl}[swap]{\pi_\ell} \mathcal{F}_\ell^u \arrow{d}{g} \arrow[bend left=10]{ddr}{\pi_u} \\
 & \arrow{dl}{p_\ell} \mathcal{P}_\ell^u \arrow{dr}[swap]{p_u} \\
C_\ell & & C_u 
\end{tikzcd}
\end{equation}

Let $\eta \in H^*(C_\ell)$ and $\omega \in H^*(\mathcal{M}_{Higgs}^{ss}(E))$. The class $\omega$ restricts to a cohomology class on $\mathcal{F}_\ell^u \subset \mathcal{M}_{Higgs}^{ss}(E)$, which we also denote by $\omega$. The cup product in the Morse-Bott complex (see \cite[Sec. 3.5]{AustinBraam95}) for $\mathcal{M}_{Higgs}^{ss}(E)$ is given by
\begin{equation*}
c_\eta(\omega) = (\pi_u)_* \left( \pi_\ell^*(\eta) \smallsmile \omega \right) .
\end{equation*}
The previous theorem shows that this factors through the global secant variety via the diagram \eqref{eqn:projections-factor}. Therefore, using the results of \cite{AustinBraam95}, the cup product on the Morse complex can be expressed entirely in terms of the homomorphisms $p_\ell^*$ and $(p_u)_*$
\begin{equation}
c_\eta(\omega) = (p_u)_* g_* \left( g^* p_\ell^*(\eta) \smallsmile \omega \right) =  (p_u)_* \left( p_\ell^*(\eta) \smallsmile g_*(\omega) \right) ,
\end{equation}
and therefore the cup product can be computed using the topology of secant varieties.

Finally, it is natural to construct the Morse-Bott-Smale compactification of the space of flow lines given by attaching spaces of broken flow lines (this is explained in detail by Austin and Braam \cite{AustinBraam95}). The next theorem shows that this compactification of the space of flow lines has an algebro-geometric interpretation in terms of the resolution of secant varieties constructed by Bertram \cite{Bertram92}.

\begin{theorem}[Theorem \ref{thm:Morse-resolution}]\label{thm:Morse-resolution-intro}
Let $P_{Morse} : \widetilde{\mathcal{L}_\ell^u} \rightarrow \overline{\mathcal{L}_\ell^u}$ be the Morse resolution associated to the compactification of broken flow lines from \eqref{eqn:Morse-resolution} and let $P_{Sec} : \widetilde{\mathcal{P}_\ell^u} \rightarrow \overline{\mathcal{P}_\ell^u}$ be the resolution of secant varieties defined by Bertram \cite{Bertram92} (cf. \eqref{eqn:secant-resolution}). Then the map that takes a broken flow line to the corresponding chain of points in secant planes from Definition \ref{def:resolution-map} makes the following diagram commute.
\begin{equation}\label{eqn:relate-resolutions-intro}
\begin{tikzcd}[column sep=1.5cm]
\widetilde{\mathcal{L}_\ell^u} \arrow{r}{\text{Def. \ref{def:resolution-map}}} \arrow{d}[swap]{P_{Morse}} & \widetilde{\mathcal{P}_\ell^u} \arrow{d}{P_{Sec}} \\
\overline{\mathcal{L}_\ell^u} \arrow{r}{\text{Prop. \ref{prop:flow-secant-closure}}} & \overline{\mathcal{P}_\ell^u} 
\end{tikzcd}
\end{equation}
\end{theorem}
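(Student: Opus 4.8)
The plan is to verify the commutativity of \eqref{eqn:relate-resolutions-intro} by first establishing it on the open dense locus of unbroken flow lines and then extending across the boundary strata. On the interior, a point of $\widetilde{\mathcal{L}_\ell^u}$ is an honest unbroken flow line and the corresponding point of $\widetilde{\mathcal{P}_\ell^u}$ lies over the smooth locus of $\mathcal{P}_\ell^u$, so both resolution maps $P_{Morse}$ and $P_{Sec}$ restrict to isomorphisms and the square collapses to the circle-bundle identification $g \colon \mathcal{F}_\ell^u \to \mathcal{P}_\ell^u$ of Theorem \ref{thm:space-unbroken-flow-lines-intro}. Hence the two composites $P_{Sec} \circ (\text{Def. \ref{def:resolution-map}})$ and $(\text{Prop. \ref{prop:flow-secant-closure}}) \circ P_{Morse}$ agree on a dense subset of $\widetilde{\mathcal{L}_\ell^u}$. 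Since $\overline{\mathcal{P}_\ell^u}$ is a projective variety and hence Hausdorff, it then suffices to prove that both composites are continuous, after which they must coincide everywhere.

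To control the boundary I would lay out the two resolutions explicitly. Bertram's resolution $P_{Sec}$ is realised as an iterated blow-up along the tower of smaller secant varieties $\mathcal{P}_\ell^{\ell+1} \subset \mathcal{P}_\ell^{\ell+2} \subset \cdots \subset \mathcal{P}_\ell^u$ \cite{Bertram92}, so that a point of $\widetilde{\mathcal{P}_\ell^u}$ records not only a point of the top secant variety but also the chain of secant planes through which it is approached, together with the projective normal-cone data along each exceptional divisor. On the Morse side, the Austin--Braam resolution $P_{Morse}$ is built from fibre products of lower-dimensional flow-line spaces over the intermediate critical sets $C_\ell = C_{j_0}, C_{j_1}, \dots, C_{j_k} = C_u$ \cite{AustinBraam95}, the additional data being the gluing parameters at each breaking. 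The map of Definition \ref{def:resolution-map} sends a resolved broken flow line to the chain of points in secant planes recorded level by level, and the content of the claim is that forgetting the gluing data is compatible with forgetting the normal-cone data.

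The inductive step matches a single breaking with a single blow-up, with induction on the secant index $u - \ell$. A flow line in $\overline{\mathcal{L}_\ell^u}$ that breaks once at an intermediate critical set $C_m$ with $\ell < m < u$ decomposes, under Theorem \ref{thm:space-unbroken-flow-lines-intro}, into a point of $\mathcal{P}_\ell^m$ and a point of $\mathcal{P}_m^u$, which is precisely the data of a point on the exceptional divisor of the blow-up of $\overline{\mathcal{P}_\ell^u}$ along $\mathcal{P}_\ell^m$ in Bertram's tower. Applying Theorem \ref{thm:space-unbroken-flow-lines-intro} to each segment and the induction hypothesis to the shorter pieces $\mathcal{L}_\ell^m$ and $\mathcal{L}_m^u$, the two composites agree over this stratum; the base case $u - \ell = 1$ admits no breaking, so the resolution is trivial and the claim reduces directly to Theorem \ref{thm:space-unbroken-flow-lines-intro}. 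Iterating over the finitely many breaking patterns gives agreement stratum by stratum, which together with the interior case both yields pointwise equality on all of $\widetilde{\mathcal{L}_\ell^u}$ and supplies the continuity needed for the density argument above.

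The principal obstacle is the identification of the gluing parameter of the Morse--Bott--Smale compactification with the projective coordinate transverse to Bertram's exceptional divisor, which is what upgrades stratumwise agreement to genuine continuity of the map in Definition \ref{def:resolution-map}. As an unbroken flow line degenerates to a broken one, the flow spends asymptotically infinite time near the intermediate critical set $C_m$, and the rescaled limit records a well-defined point of the projectivised negative eigenbundle of the Hessian; I would need to show that this rescaling matches, as a parameter, the coordinate normal to the exceptional divisor of the blow-up along $\mathcal{P}_\ell^m$. This amounts to comparing the asymptotic exponential decay rate of the gradient flow near $C_m$, governed by the Hessian eigenvalues and hence by the degrees of $L_1$ and $L_2$, with the weights defining Bertram's blow-up, and it is where the explicit gauge-theoretic description of the flow from Section \ref{subsec:explicit-gauge} does the decisive work.
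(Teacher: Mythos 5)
Your core argument --- the interior case plus the stratum-by-stratum matching of one breaking with one level of Bertram's tower --- is in essence the paper's proof, but the paper shows this can be compressed to a set-level triviality, and the machinery you wrap around it creates problems rather than solving them. In the paper, Lemma \ref{lem:Morse-fibres} (applied recursively) identifies a point of $\widetilde{\mathcal{L}_\ell^u}$ with a chain of unbroken flow lines $\{x_1,\ldots,x_n\}$, Lemma \ref{lem:secant-fibres} (applied recursively) identifies a point of $\widetilde{\mathcal{P}_\ell^u}$ with a chain of points in secant planes as in \eqref{eqn:broken-secant-plane}, Definition \ref{def:resolution-map} sends $\{x_1,\ldots,x_n\}$ to $\{g(x_1),\ldots,g(x_n)\}$, and both vertical maps are ``forget all but the first entry of the chain''. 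Hence $P_{Sec}\circ G$ and $g\circ P_{Morse}$ both send $\{x_1,\ldots,x_n\}$ to $g(x_1)$: commutativity holds pointwise and immediately, with no induction on $u-\ell$, no density argument, and no continuity statement required. Your Hausdorff/density overlay is superfluous at best and circular at worst, since the continuity it presupposes is exactly what you defer to your ``principal obstacle'', while your stratumwise agreement (if carried out) already gives pointwise equality everywhere and makes density irrelevant.

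That last point is the genuine defect in the proposal as written. Commutativity of \eqref{eqn:relate-resolutions-intro} is an equality of two composites as maps of sets, so the asymptotic identification of Austin--Braam gluing parameters with coordinates transverse to Bertram's exceptional divisors --- which you declare to be where ``the decisive work'' happens and then leave entirely unexecuted (``I would need to show\ldots'') --- is not needed for the theorem at all; it would only be relevant to proving that the map of Definition \ref{def:resolution-map} is continuous, or that the two resolutions are homeomorphic over $\overline{\mathcal{L}_\ell^u}$, neither of which is asserted here or proved in the paper. Since your own architecture routes the conclusion through this unproven analytic step, the proposal is incomplete on its own terms, even though the stratumwise bookkeeping in your middle paragraphs already contains everything needed. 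Separately, your indexing of the blow-up tower is wrong: the nested secant varieties inside $\mathbb{P}W_u^-$ are $\mathcal{P}_{u-1}^u \subset \mathcal{P}_{u-2}^u \subset \cdots \subset \overline{\mathcal{P}_\ell^u}$, all fibre bundles over the single critical set $C_u$, so Bertram's centres are the $\mathcal{P}_m^u$ with $\ell < m < u$; the space $\mathcal{P}_\ell^m$ you name as a centre is a bundle over the intermediate critical set $C_m$, is not a subvariety of $\overline{\mathcal{P}_\ell^u}$, and enters instead through the recursive description of the \emph{fibres} of $P_{Sec}$ in Lemma \ref{lem:secant-fibres}, i.e.\ as the home of the second entry $g(x_2)$ of the chain.
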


{\bf Organisation of the paper.} Section \ref{sec:background} contains the background material and notational conventions used throughout the paper. The secant varieties of the Riemann surface in the unstable manifold of a critical point are constructed in Section \ref{sec:secant-varieties}, which leads to the classification of the unbroken flow lines (Theorem \ref{thm:space-unbroken-flow-lines}) in Section \ref{sec:flow-line-classification}, and in turn a proof that $\| \phi \|_{L^2}^2$ is Morse-Bott-Smale in Section \ref{sec:Morse-Bott-Smale}. Finally, Section \ref{sec:compactification} contains the details of the compactification of the space of flow lines and its relation with the resolution of secant varieties (Theorem \ref{thm:Morse-resolution}).

{\bf Acknowledgements.} The author gratefully acknowledges support from the Simons Center for Geometry and Physics, Stony Brook University and the organisers of the summer workshop on Moduli which provided the motivation for this paper, as well as Tamas Hausel, Ana Pe\'on Nieto and Paul Feehan for useful discussions.

\section{Background and notational conventions}\label{sec:background}

In this section we recall the relevant results and set the notation for the remainder of the paper. Unless otherwise noted, the material is well understood and can be found in \cite{Hitchin87} or \cite{Simpson92}.

Let $X$ be a compact Riemann surface of genus $g \geq 2$ and let $E \rightarrow X$ be a rank $2$ complex vector bundle of degree $d = 0 \, \, \text{or} \, \,  1$. Fix a smooth Riemannian metric on $X$ and a smooth Hermitian metric on $E$, and let $\mathcal{A}^{0,1}$ denote the space of holomorphic structures on $E$. The complex gauge group is denoted $\mathcal{G}^\mathbb{C}$ and the unitary gauge group associated to the Hermitian metric is denoted $\mathcal{G}$.

One can also consider the determinant map $\det : \mathcal{A}^{0,1} \rightarrow \Jac_d(X)$ and for a fixed $\xi \in \Jac_d(X)$ the subsets $\mathcal{A}_\xi^{0,1} := \det^{-1}(\xi)$ and $\End_0(E) := \{ u \in \End(E) \, \mid \, \tr(u) = 0 \}$. Let $M \rightarrow X$ be a line bundle with $\deg M > 0$. For the two cases $G = \GL(2, \mathbb{C})$ or $G = \SL(2, \mathbb{C})$, the \emph{space of $G$-Higgs pairs twisted by $M$} is denoted
\begin{align*}
\mathcal{B} & = \{ (\bar{\partial}_A, \phi) \in \mathcal{A}^{0,1} \times \Omega^0(\End(E) \otimes M) \, \mid \, \bar{\partial}_A \phi = 0 \} \quad \text{($G = \GL(2, \mathbb{C})$)} \\
\mathcal{B}_\xi & = \{ (\bar{\partial}_A, \phi) \in \mathcal{A}_\xi^{0,1} \times \Omega^0(\End_0(E) \otimes M) \, \mid \, \bar{\partial}_A \phi = 0 \} \quad \text{($G = \SL(2, \mathbb{C})$).}
\end{align*}

Many of the constructions for $G = \SL(2, \mathbb{C})$ are the same as those for $G = \GL(2, \mathbb{C})$. To avoid notational complexity, from now on we will use the notation for $G = \GL(2, \mathbb{C})$ and only distinguish between the two cases when necessary; for example when specifying the fixed point sets of the $\mathbb{C}^*$ action.

The open subset of stable (resp. semistable) Higgs pairs is denoted $\mathcal{B}^{st}$ (resp. $\mathcal{B}^{ss}$) and the moduli space of stable (resp. semistable) Higgs bundles on $E$ is denoted 
\begin{equation*}
\mathcal{M}_{Higgs}^{st}(E) := \mathcal{B}^{st} / \mathcal{G}^\mathbb{C} \quad \text{ (resp. $\mathcal{M}_{Higgs}^{ss}(E) := \mathcal{B}^{ss} / \negthinspace / \mathcal{G}^\mathbb{C}$)} .
\end{equation*}
If $\deg E$ and $\rank(E)$ are coprime then $\mathcal{B}^{st} = \mathcal{B}^{ss}$ and the moduli space is a smooth manifold.

For each $(\bar{\partial}_A, \phi) \in \mathcal{B}^{ss}$, the associated equivalence class in $\mathcal{M}_{Higgs}^{ss}(E)$ is denoted by $[\bar{\partial}_A, \phi]$. When the holomorphic bundle is a direct sum $L_1 \oplus L_2$ or an extension $0 \rightarrow L_2 \rightarrow E \rightarrow L_1 \rightarrow 0$ of line bundles, then it is more convenient to use the notation $[L_1 \oplus L_2, \phi]$ or $[E, \phi]$.

With respect to the fixed Hermitian metric, each holomorphic structure $\bar{\partial}_A$ has an associated Chern connection denoted $d_A$ with curvature $F_A$. \emph{Hitchin's equations} for the Higgs pair $(\bar{\partial}_A, \phi)$ are
\begin{equation}\label{eqn:hitchin-equations}
*(F_A + [\phi, \phi^*]) = \lambda \cdot \id, \quad \text{where $\lambda = - \frac{2 \pi i \deg(E)}{\vol(X) \rank(E)}$} .
\end{equation}
The Hitchin-Kobayashi correspondence of Hitchin \cite{Hitchin87} and Simpson \cite{Simpson88} shows that
\begin{equation}\label{eqn:hitchin-simpson}
\mathcal{M}_{Higgs}^{ss}(E) \cong \{ (\bar{\partial}_A, \phi) \in \mathcal{B} \, \mid \, \text{$(\bar{\partial}_A, \phi)$ satisfy \eqref{eqn:hitchin-equations}}  \} / \mathcal{G} .
\end{equation}

\subsection{Properties of the energy function}\label{subsec:properties-energy-function}

The function $\| \phi \|_{L^2}^2 : \mathcal{B} \rightarrow \mathbb{R}$ is $\mathcal{G}$-invariant, and so \eqref{eqn:hitchin-simpson}  shows that the restriction to the solutions of \eqref{eqn:hitchin-equations} descends to a well-defined function
\begin{equation*}
f := \| \phi \|_{L^2}^2 : \mathcal{M}_{Higgs}^{ss}(E) \rightarrow \mathbb{R} ,
\end{equation*}
which is the moment map associated to the circle action $e^{i \theta} \cdot [\bar{\partial}_A, \phi] = [\bar{\partial}_A, e^{i\theta} \phi]$ (cf. \cite{Hitchin87}). The general result of Frankel \cite{Frankel59} shows that (when the moduli space is smooth) $f$ is a perfect Morse-Bott function.

This action extends to a $\mathbb{C}^*$ action $e^{u} \cdot [\bar{\partial}_A, \phi] = [\bar{\partial}_A, e^{u} \phi]$ for $u \in \mathbb{C}$. The gradient flow lines of $f$ on $\mathcal{M}_{Higgs}^{ss}(E)$ are generated by the subgroup $\mathbb{R}_{>0} \subset \mathbb{C}^*$, for which the action is 
\begin{equation}\label{eqn:grad-flow-generation}
e^t \cdot [\bar{\partial}_A, \phi] = [\bar{\partial}_A, e^{t} \phi], \quad t \in \mathbb{R} .
\end{equation}

The minimum $f^{-1}(0)$ corresponds to the subset of semistable Higgs pairs with zero Higgs field, which is the moduli space of semistable holomorphic bundles $\mathcal{M}^{ss}(E) \hookrightarrow \mathcal{M}_{Higgs}^{ss}(E)$. The nonminimal critical points of $f$ correspond to fixed points of the $\mathbb{C}^*$ action, which have a well-understood classification in terms of variations of Hodge structure (cf. \cite{Simpson92}). In general, a Higgs pair $[(E, \phi)]$ is a fixed point of the $\mathbb{C}^*$ action if and only if 
\begin{enumerate}

\item the bundle decomposes as a direct sum $E \cong F_1 \oplus \cdots \oplus F_n$,

\item for each $j = 1, \ldots, n-1$ the Higgs field satisfies $\phi(F_j) \subset F_{j+1} \otimes M$, and

\item the resulting Higgs pair is semistable.

\end{enumerate}
The equation \eqref{eqn:hitchin-equations} then determines the value of $f = \| \phi \|_{L^2}^2$ at each critical set. 

In the rank $2$ case, the minimum of $f$ occurs when the holomorphic bundle is semistable and the Higgs field is zero. At a nonminimal critical point, the holomorphic bundle is a direct sum of line bundles $E \cong L_1 \oplus L_2$ with $\deg L_2 < \deg L_1 \leq \deg L_2 + \deg M$ and $\phi \in H^0(L_1^* L_2 \otimes K)$. Let $d_1 = \deg L_1$, $d_2 = \deg L_2$ and $m = \deg M$. The critical values are ordered by the value of $d_1-d_2$, and each critical set is homeomorphic to $S^{d_2 - d_1+m}X \times J(X)$ ($G = \GL(2, \mathbb{C})$) and $\tilde{S}^{d_2-d_1+m}X$ ($G = \SL(2, \mathbb{C})$), where $\tilde{S}^d X$ denotes the $2^{2g}$ fold cover of $S^d X$ studied by Hitchin \cite[Prop. 7.1]{Hitchin87}.

In the sequel, the nonminimal critical set corresponding to $\deg L_1 = d$ will be denoted $C_d$ for each integer value of $d$ in the range $\frac{1}{2} \deg E < d \leq \frac{1}{2} \left( \deg E + \deg M \right)$. The minimum (corresponding to $\phi = 0$) will be denoted $C_0$. Using the fact that the gradient flow of $f$ is defined using the $\mathbb{R}_{>0}$ action \eqref{eqn:grad-flow-generation}, the stable and unstable sets of $C_d$ are defined by
\begin{align*}
W_d^+ & := \{ [\bar{\partial}_A, \phi] \in \mathcal{M}_{Higgs}^{ss}(E) \, \mid \, \lim_{\lambda \rightarrow 0} [\bar{\partial}_A, \lambda \phi] \in C_d \} \\
W_d^- & := \{ [\bar{\partial}_A, \phi] \in \mathcal{M}_{Higgs}^{ss}(E) \, \mid \, \lim_{\lambda \rightarrow \infty} [\bar{\partial}_A, \lambda \phi] \in C_d \} .
\end{align*}
The corresponding spaces with the critical sets removed are denoted
\begin{equation*}
W_{d,0}^+ := W_d^+ \setminus C_d, \quad W_{d,0}^- := W_d^- \setminus C_d .
\end{equation*}
If the degree and rank of $E$ are coprime then $W_d^+, W_d^-$ are manifolds. In general, the action of $\mathbb{R}_{>0} \subset \mathbb{C}^*$ still determines a well-defined integral curve, and therefore the spaces $W_d^+, W_d^-$ are still well-defined for $E$ of any degree and rank.

For any $\ell < u$, the space of Higgs pairs that flow down to $C_\ell$ and up to $C_u$ is denoted 
\begin{equation}\label{eqn:R-bundle-flow-lines}
\mathcal{F}_\ell^u := W_\ell^+ \cap W_u^-.
\end{equation}
The space of \emph{unbroken flow lines} between two critical sets is then given by dividing by the $\mathbb{R}_{>0}$ action generating the flow
\begin{equation}\label{eqn:unbroken-flow-lines}
\mathcal{L}_\ell^u := \mathcal{F}_\ell^u / \mathbb{R}_{>0} .
\end{equation}

\subsection{Morse strata for the gradient flow}

Recent work of Hausel and Hitchin \cite[Prop. 3.4 \& 3.11]{HauselHitchin22} gives a complete classification of the Morse strata for the upwards and downwards flow of $f$ in terms of the existence of filtrations of the underlying holomorphic bundle for which the Higgs field satisfies a compatibility condition. Their results are summarised in the following

\begin{proposition}[\cite{HauselHitchin22}]\label{prop:hausel-hitchin-general}
Let $[(E, \phi)], [(E', \phi')] \in \mathcal{M}_{Higgs}^{ss}$. Then
\begin{enumerate}

\item $\lim_{\lambda \rightarrow 0} [(E, \phi)] = [(E', \phi')]$ if and only if there exists a filtration by subbundles
\begin{equation*}
0 = E_0 \subset E_1 \subset \cdots \subset E_n = E
\end{equation*}
such that $\phi(E_k) \subset E_{k+1} \otimes K$ for each $k = 1, \ldots, n-1$, and the induced maps
\begin{equation*}
gr_0(\phi) : E_{k}/E_{k-1} \rightarrow E_{k+1} / E_{k}
\end{equation*}
satisfy $(E', \phi') \cong (E_1 / E_0 \oplus \cdots \oplus E_n / E_{n-1}, gr_0(\phi))$.

\item $\lim_{\lambda \rightarrow \infty} [(E, \phi)] = [(E', \phi')]$ if and only if there exists a filtration by subbundles
\begin{equation*}
0 = E_0 \subset E_1 \subset \cdots \subset E_n = E
\end{equation*}
such that $\phi(E_{k+1}) \subset E_k \otimes K$ for each $k = 0, \ldots, n-1$, and the induced maps
\begin{equation*}
gr_\infty(\phi) : E_{k+1}/E_{k} \rightarrow E_{k} / E_{k-1}
\end{equation*}
satisfy $(E', \phi') \cong (E_1 / E_0 \oplus \cdots \oplus E_n / E_{n-1}, gr_\infty(\phi))$.

\end{enumerate}

Moreover, the filtrations with the properties in (i) and (ii) are unique.
\end{proposition}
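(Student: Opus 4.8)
The plan is to prove statement (i) in detail and then obtain (ii) by the symmetry $k \mapsto -k$ on the weights of the $\mathbb{C}^*$-action, which interchanges the roles of $\lim_{\lambda \to 0}$ and $\lim_{\lambda \to \infty}$. Throughout I would work with the action $\lambda \cdot [(E,\phi)] = [(E, \lambda \phi)]$, whose fixed points are exactly the Hodge bundles: a point is fixed iff there is a holomorphic endomorphism $\psi$ of $E$ with $[\psi,\phi] = \phi$, and diagonalising $\psi$ gives a grading $E' = \bigoplus_k F_k$ with $\phi'(F_k) \subset F_{k+1} \otimes K$, where $F_k$ is the weight-$k$ eigenbundle. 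The existence of $\lim_{\lambda \to 0}[(E,\phi)]$ as a genuine semistable Higgs bundle (necessarily such a fixed point) I would take from the convergence of the gradient flow of $f$ together with properness of the moduli space over the Hitchin base (Simpson); this is the input that lets us speak of the limit as a Hodge bundle with a canonical grading.

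For the reverse implication of (i), suppose we are given a filtration $0 = E_0 \subset \cdots \subset E_n = E$ with $\phi(E_k) \subset E_{k+1} \otimes K$. I would exhibit the associated graded as the limit by an explicit one-parameter degeneration. Choose a $C^\infty$ splitting $E \cong \bigoplus_k Q_k$ with $Q_k \cong E_k/E_{k-1}$ and let $g_\lambda \in \mathcal{G}^{\mathbb{C}}$ act as $\lambda^{-k}$ on $Q_k$. Writing $\phi = (\phi_{ij})$ with $\phi_{ij} : Q_j \to Q_i \otimes K$, the hypothesis $\phi(E_k) \subset E_{k+1} \otimes K$ forces $\phi_{ij} = 0$ for $i > j+1$, while $\phi_{j+1,j} = gr_0(\phi)$. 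Conjugating the scaled field $\lambda\phi$ by $g_\lambda$ multiplies $\phi_{ij}$ by $\lambda^{1+j-i}$, so as $\lambda \to 0$ the diagonal and lowering terms ($i \le j$) vanish, the subdiagonal term survives unchanged, and no term blows up. Simultaneously, holomorphicity of the filtration makes $\bar{\partial}_A$ block lower triangular, so conjugation by $g_\lambda$ rescales the second fundamental forms by $\lambda^{j-i}$ with $j-i > 0$ and they decay, leaving the split holomorphic structure. Hence $[(E, \lambda\phi)] = [g_\lambda \cdot (E,\lambda\phi)]$ converges to $(\bigoplus_k E_k/E_{k-1}, gr_0(\phi))$, which is the claim. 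The only point requiring care is uniformity of this convergence, which holds because $g_\lambda$ is triangular with respect to the filtration.

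The forward implication of (i) is where I expect the real obstacle. Given $\lim_{\lambda \to 0}[(E,\phi)] = [(E',\phi')]$ with grading $E' = \bigoplus_k F_k$, I would realise the $\mathbb{C}^*$-orbit together with its limit as a $\mathbb{C}^*$-equivariant flat family of Higgs sheaves over a disk, restricting to $(E, \lambda\phi)$ for $\lambda \neq 0$ and to $(E', \phi')$ over $0$. The grading on the central fibre then defines, by the Rees/Bialynicki--Birula construction, a candidate filtration of $E$: roughly, $E_k$ is cut out by those sections of the family that specialise at $0$ into the weight-$\le k$ summand $\bigoplus_{j \le k} F_j$. The genuinely hard step is to show that these $E_k$ are \emph{holomorphic} subbundles of $E$ (not merely $C^\infty$) with $\phi(E_k) \subset E_{k+1} \otimes K$, which amounts to identifying the analytic limit of the gradient flow with the algebraic Bialynicki--Birula limit and transporting the holomorphic grading of the special fibre back along the degeneration while controlling the asymptotics of the gauge transformations implementing the flow.

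Finally, for uniqueness I would characterise $E_k$ intrinsically: it is the unique $\bar{\partial}_A$-holomorphic, $\phi$-compatible subbundle of $E$ whose induced Higgs subsheaf degenerates under $\lambda \to 0$ onto the weight-$\le k$ part of $(E',\phi')$. Since the limiting Hodge bundle and its weight grading are canonical, this filtration is canonical, giving the asserted uniqueness. Statement (ii) then follows verbatim by applying (i) to the $\mathbb{C}^*$-action with reversed weights, under which the condition $\phi(E_k) \subset E_{k+1} \otimes K$ becomes $\phi(E_{k+1}) \subset E_k \otimes K$ and $gr_0(\phi)$ is replaced by $gr_\infty(\phi)$.
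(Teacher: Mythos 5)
First, a point of comparison: the paper does not prove this proposition at all --- it is imported from Hausel--Hitchin \cite[Prop.~3.4 \& 3.11]{HauselHitchin22} and stated as a summary of their results. So there is no internal proof to measure your proposal against; what you have written is an attempt to reprove their theorem, and it must be judged on its own terms.

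On those terms, the reverse implications are correct and complete: the block-triangular rescaling argument (conjugate $(\bar{\partial}_A,\lambda\phi)$ by $g_\lambda$ acting as $\lambda^{-k}$ on a $C^\infty$ splitting, check that every component of the holomorphic structure and of $\lambda\phi$ carries a nonnegative power of $\lambda$, with exponent zero exactly on the subdiagonal of $\phi$) is the standard and valid way to show that a filtration with $\phi(E_k)\subset E_{k+1}\otimes K$ forces $\lim_{\lambda\to 0}[(E,\lambda\phi)]$ to be the associated graded, using continuity of $\mathcal{B}^{ss}\to\mathcal{M}_{Higgs}^{ss}$ and the Hausdorff property of the quotient; the weight-reversal symmetry reducing (ii) to (i) is also legitimate. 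The genuine gap is the forward implication together with the uniqueness claim, which is precisely the substance of the Hausel--Hitchin result. You outline a Rees/Bialynicki--Birula strategy and then declare that ``the genuinely hard step'' --- holomorphicity of the $E_k$, the compatibility $\phi(E_k)\subset E_{k+1}\otimes K$, and the identification of the analytic limit of the flow with the algebraic specialisation --- remains to be carried out; naming the obstacle is not the same as overcoming it. Two specific points in the sketch would fail as written. First, in the non-coprime case a flat semistable extension of the family $(E,\lambda\phi)$ over the disk (Langton/Simpson) determines the central fibre only up to S-equivalence, so producing a $\mathbb{C}^*$-equivariant family whose central fibre is literally $(E',\phi')$ with its weight grading --- the input your Rees construction needs --- requires an argument you do not supply. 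Second, your uniqueness argument is circular: defining $E_k$ as ``the unique $\bar{\partial}_A$-holomorphic, $\phi$-compatible subbundle whose degeneration is the weight-$\leq k$ part'' presupposes that any two filtrations with the stated properties coincide, which is exactly what is to be proved (for instance via uniqueness of the equivariant extension, or by directly comparing two Rees families using separatedness). As it stands, the proposal proves one direction of each equivalence and gives a plausible but incomplete plan for the other direction and for uniqueness.
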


In the case of rank $2$, part (i) of the above result reduces to an earlier observation of Hitchin \cite{Hitchin87} that the Morse strata for the downwards flow correspond to the Harder-Narasimhan strata for the underlying subbundle (see also \cite{hauselthesis}). The same is true for $\U(2,1)$ and $\SU(2,1)$ Higgs bundles \cite{Gothen02}. For rank $3$ and higher this is no longer true and the stratification (studied in detail by Gothen and Zuniga-Rojas \cite{GothenZunigaRojas17} for the rank $3$ case) is much more intricate. 

In order to apply \cite[Prop. 3.4 \& 3.11]{HauselHitchin22} to a specific Higgs pair $[(\bar{\partial}_A, \phi)] \in \mathcal{M}_{Higgs}^{ss}(E)$, one needs to first find a filtration of the appropriate type. The results of Section \ref{sec:flow-line-classification} give a criterion for such filtrations to exist in the unstable set of each critical set. This criterion is geometric in nature and emphasises the role of the complex structure on the underlying Riemann surface $X$.

\subsection{The polystable locus in the rank $2$ case}\label{subsec:rank-2-remarks}

In general, the singularities of $\mathcal{M}_{Higgs}^{ss}(E)$ are contained in the locus where $[E, \phi]$ is strictly polystable, so that there is a direct sum
\begin{equation*}
(E, \phi) \cong (E_1, \phi_1) \oplus  \cdots \oplus (E_n, \phi_n)
\end{equation*}
of $n>1$ stable Higgs pairs of the same slope. When $\rank (E) = 2$ and $\deg(E) = 0$, then this can only occur if $(E, \phi)$ is a direct sum of two Higgs line bundles. In this case it was observed in \cite{DWW10} that the locus of strictly polystable bundles does not intersect any of the nonminimal critical sets. In particular, the unstable sets $W_d^-$ are all manifolds and all of the stable sets $W_d^+$ are manifolds when $d > 0$. Therefore, even though $\mathcal{M}_{Higgs}^{ss}(E)$ is singular, when $\rank(E) = 2$ it still makes sense to refer to $\| \phi \|_{L^2}^2$ as a perfect Morse-Bott function, or (after proving Proposition \ref{prop:morse-bott-smale}) a perfect Morse-Bott-Smale function, since it has a well-defined flow given by the $\mathbb{R}_{>0} \subset \mathbb{C}^*$ action, and the spaces of flow lines and the local structure around the \emph{nonminimal} critical sets satisfy these conditions.

\subsection{The unstable manifold in a neighbourhood of a critical set}

In the rank $2$ case studied in \cite{Hitchin87}, the critical sets consist of Higgs bundles for which the holomorphic bundle is a direct sum of line bundles $E \cong L_1 \oplus L_2$ (we will always use the convention that $\deg L_1 > \deg L_2$) and the Higgs field is $\phi \in H^0(L_1^* L_2 \otimes M) \setminus \{0\}$. The next result is contained in \cite{Hitchin87}, and we state it here in order to use it in the sequel.

\begin{lemma}\label{lem:classify-unstable-set}
In a neighbourhood of a critical point $y := [L_1 \oplus L_2, \phi]$, the unstable manifold $W_y^-$ of points that flow up to $y$ is given by equivalence classes of Higgs pairs for which the bundle is an extension
\begin{equation*}
0 \rightarrow L_2 \rightarrow E \rightarrow L_1 \rightarrow 0
\end{equation*}
and the Higgs field is $\phi \in H^0(L_1^* L_2 \otimes M) \subset H^0(\End(E) \otimes M)$. 

In particular, the unstable manifold is parametrised by the space of extensions $W_y^- \cong H^1(L_1^*L_2)$ and the Morse index at the critical point $y$ is then given by
\begin{equation}\label{eqn:Morse-index}
\lambda_y : = \dim_\mathbb{R} W_y^- = 2 h^1(L_1^*L_2) = 2g-2 - 2 \deg(L_1^*L_2) = 2g-2+2(\deg L_1 - \deg L_2) .
\end{equation}

For a given critical set $C_d$, the unstable manifold, denoted $W_d^-$, is a vector bundle over $C_d$ with fibre over the critical point $y := [L_1 \oplus L_2, \phi]$ given by $W_y^- \cong H^1(L_1^*L_2)$.
\end{lemma}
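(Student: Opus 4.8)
The plan is to analyze the upward gradient flow near the critical point $y = [L_1 \oplus L_2, \phi]$ by linearising the $\mathbb{C}^*$ action and identifying the positive-weight space of the Hessian with a deformation space that parametrises extensions. First I would set up the deformation complex for the Higgs pair $(L_1 \oplus L_2, \phi)$ at $y$. The tangent space to the moduli space at $y$ is the first hypercohomology of the two-term complex $\End_0(E) \xrightarrow{[\,\cdot\,,\phi]} \End_0(E) \otimes M$, and the key point is that the $\mathbb{C}^*$ action $e^u \cdot [\bar\partial_A, \phi] = [\bar\partial_A, e^u \phi]$ induces a weight decomposition on this tangent space. Because $E = L_1 \oplus L_2$ is a direct sum of eigen-line-bundles for the associated $S^1$ action with $\deg L_1 > \deg L_2$, the endomorphism bundle decomposes as $\End(E) = L_1^* L_1 \oplus L_2^* L_2 \oplus L_1^* L_2 \oplus L_2^* L_1$, and these summands carry weights $0, 0, +1, -1$ respectively. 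The unstable manifold $W_y^-$ is tangent to the sum of positive-weight eigenspaces of the Hessian, which I would identify as the positive-weight part of the deformation hypercohomology.

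Next I would compute that positive-weight piece explicitly. The infinitesimal deformations of the extension structure — i.e. deformations of the holomorphic structure that keep $L_2$ a subbundle but allow $E$ to become a nontrivial extension $0 \to L_2 \to E \to L_1 \to 0$ — are classified by $H^1(L_2^* L_1 \otimes \text{?})$; more precisely the off-diagonal Dolbeault cohomology in the weight-$(+1)$ direction gives $H^1(L_1^* L_2)$. Here the degree hypothesis $\deg L_1 > \deg L_2$ forces the relevant weight-zero and higher obstruction pieces to vanish (by Serre duality and degree considerations, $H^0(L_1^* L_2 \otimes M^{-1})$ or the analogous obstruction group vanishes in the stable range for the critical set), so the positive eigenspace of the Hessian is exactly $H^1(L_1^* L_2)$ with no obstruction, and the flow-up limit recovers $(L_1 \oplus L_2, \phi)$ by the graded structure of Proposition \ref{prop:hausel-hitchin-general}(ii). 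The Morse index is then twice the complex dimension of this space, and I would finish the index formula \eqref{eqn:Morse-index} by computing $h^1(L_1^* L_2) = g - 1 - \deg(L_1^* L_2)$ from Riemann-Roch, using that $h^0(L_1^* L_2) = 0$ since $\deg(L_1^* L_2) = \deg L_2 - \deg L_1 < 0$.

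For the global statement, I would let the critical point $y$ vary over the critical set $C_d$. Since $C_d$ is parametrised by the choices of $(L_1, L_2, \phi)$ with $\deg L_1 = d$ fixed (so $C_d \cong \tilde S^{d_2-d_1+m} X$ or $S^{d_2-d_1+m} X \times J(X)$ as recalled above), the spaces $H^1(L_1^* L_2)$ fit together into a vector bundle over $C_d$: the fibrewise-constant rank $h^1(L_1^* L_2)$ (constant because $\deg L_1 - \deg L_2 = d_1 - d_2$ is fixed along $C_d$) ensures the cohomology groups assemble into a locally free sheaf by Grauert's theorem, giving $W_d^-$ the structure of a vector bundle with the stated fibre.

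The main obstacle I anticipate is the identification of the positive-weight Hessian eigenspace with the \emph{full} extension space $H^1(L_1^* L_2)$ rather than with a proper subspace, and in particular verifying that the Higgs field $\phi \in H^0(L_1^* L_2 \otimes M)$ genuinely survives as part of the limiting critical datum while the extension class is what gets ``blown up'' under the upward flow. This requires care in separating the deformation directions that change the Higgs field (which stay within the critical set or its normal directions of weight $0$) from those that deform the underlying bundle into a nontrivial extension (weight $+1$). Since this lemma is attributed to \cite{Hitchin87}, I would expect the cleanest route to be to cite Hitchin's explicit local description and simply verify the weight bookkeeping and the Riemann-Roch computation, rather than re-deriving the deformation theory from scratch.
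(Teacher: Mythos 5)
The paper offers no proof of this lemma to compare against: it is stated as a result ``contained in \cite{Hitchin87}'' and simply quoted for later use. Your reconstruction is, in substance, the standard argument behind Hitchin's computation, and it is correct: the tangent space at the fixed point is the hypercohomology of $\End(E) \xrightarrow{[\cdot,\phi]} \End(E)\otimes M$, the $\mathbb{C}^*$ action decomposes it into weight spaces according to $\End(E) = L_1^*L_1 \oplus L_2^*L_2 \oplus L_1^*L_2 \oplus L_2^*L_1$, the only summand contributing to the unstable directions is $L_1^*L_2$, whose $H^1$ is precisely the space of extensions $0 \to L_2 \to E \to L_1 \to 0$, and Riemann--Roch together with $h^0(L_1^*L_2)=0$ (as $\deg L_1^*L_2 < 0$) gives \eqref{eqn:Morse-index}; constancy of $h^1(L_1^*L_2)$ along $C_d$ then gives the vector bundle structure. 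So your proposal is a faithful filling-in of the citation, not a departure from it.

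Two cautions. First, your weight bookkeeping is internally consistent but opposite to the canonical convention: with compensating gauge transformation $g_\lambda = \mathrm{diag}(\lambda,1)$, the induced action on a tangent vector $(a,\psi)$ is $(\mathrm{Ad}_{g_\lambda} a,\; \lambda\,\mathrm{Ad}_{g_\lambda}\psi)$, so the $\Omega^{0,1}(L_1^*L_2)$ directions carry weight $-1$ (not $+1$), and $W_y^-$, being the set of points with $\lim_{\lambda\to\infty}$ equal to $y$, is the \emph{negative}-weight space. Your two sign flips cancel, so the conclusion is unharmed, but as written the conventions would clash with the rest of the paper. Second, your appeal to vanishing ``obstruction groups'' to conclude that $W_y^-$ is the \emph{full} space $H^1(L_1^*L_2)$ is vaguer than it needs to be: one can check directly that for any $a \in \Omega^{0,1}(L_1^*L_2)$ the pair $(\bar{\partial}+a,\phi)$ is a genuine Higgs pair, since $[a,\phi]=0$ (both are strictly triangular in the same off-diagonal block), and that
\begin{equation*}
g_\lambda \cdot (\bar{\partial}+a, \lambda\phi) = (\bar{\partial}+\lambda^{-1}a, \phi) \longrightarrow (\bar{\partial},\phi) \quad \text{as } \lambda \to \infty ,
\end{equation*}
which exhibits every extension class as a point of $W_y^-$ and makes the Bia{\l}ynicki-Birula identification concrete without any deformation-theoretic vanishing argument.
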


Since the critical values are isolated, for each critical set $C_d$ there exists $\varepsilon > 0$ such that there are no critical values in the interval $\left[ f(C_d) - \varepsilon, f(C_d) \right)$. Define
\begin{equation}\label{eqn:neg-sphere-bundle}
S_d^- := \{ z \in W_d^- \, \mid \, f(z) = f(C_d) - \varepsilon \} ,
\end{equation}
which is homeomorphic to a sphere bundle $S_d^- \rightarrow C_d$, for which the fibre over $x \in C_d$ is a sphere of dimension $\lambda_y - 1$.

\section{Secant varieties in the unstable manifold}\label{sec:secant-varieties}

Each unstable manifold $W_d^-$ is stratified by
\begin{equation}\label{eqn:unstable-stratification}
W_d^- = \bigcup_{0 \leq \ell < d} (\mathcal{F}_\ell^d \cap W_d^-) .
\end{equation}
In a neighbourhood of the critical set $C_d$, the unstable set $W_d^-$ is diffeomorphic to a vector bundle $V_d^- \rightarrow C_d$ with fibres $H^1(L_1^* L_2)$. Since the $\mathbb{C}^*$ action acts by scaling the extension classes in these fibres, and the gradient flow is $\mathbb{C}^*$ equivariant, then the stratification \eqref{eqn:unstable-stratification} descends to the projectivisation $\mathbb{P} V_d^-$. In the next section (Theorem \ref{thm:space-unbroken-flow-lines}) we will show that the strata have a geometric description in terms of certain secant varieties for the embedding $X \hookrightarrow \mathbb{P}H^1(L_1^*L_2)$, and so in this section we recall some facts about secant varieties and set the notation for the next section. Useful references for the following are \cite{Bertram92}, \cite{LangeNarasimhan83}, \cite{Lange84}, \cite{Schwarzenberger64}.

Consider a critical point $[L_1 \oplus L_2, \phi] \in C_d$. Then $L_2^* L_1 \otimes K$ is very ample, and so there is an embedding 
\begin{equation*}
F : X \hookrightarrow \mathbb{P}H^1(L_1^* L_2) \cong \mathbb{P}H^0(L_2^* L_1 \otimes K)^* .
\end{equation*}

Any effective divisor $D = \sum_{j=1}^k m_j p_j$ of degree $N$ defines a Hecke modification
\begin{equation*}
L_1' := L_1 \left[ -D \right] ,
\end{equation*}
together with an induced homomorphism
\begin{equation*}
i^* : H^1(L_1^* L_2) \rightarrow H^1((L_1')^*L_2) .
\end{equation*}

\begin{definition}[Secant plane of total multiplicity $N$]
Given any integer $N < \deg L_1 - \deg L_2$ and an effective divisor $D = \sum_{j=1}^k m_j p_j$ of degree $N$, the \emph{secant plane of $D$ in $\mathbb{P}H^1(L_1^* L_2)$} is the plane determined by the projectivisation of
\begin{equation*}
\ker \left( H^1(L_1^* L_2) \stackrel{i^*}{\longrightarrow} H^1((L_1[-D])^*L_2) \right) .
\end{equation*}
The \emph{total multiplicity} is $N$.
\end{definition}

If $N > 2$ then it is \emph{a priori} possible that the dimension of the secant plane will be lower than expected; for example, if three points in $X$ lie on a projective line in $\mathbb{P}H^1(L_1^* L_2)$. The next lemma gives a bound on $N$ for which the secant plane is guaranteed to have the expected dimension. In particular, this result applies to all flow lines between nonminimal critical sets (see Lemma \ref{lem:flow-lines-secant}).

\begin{lemma}\label{lem:nondegenerate-secant-bound}
If $N < d_1 - d_2$, then every secant plane in $\mathbb{P}H^1(L_1^* L_2)$ corresponding to a divisor $D = \sum_k m_k p_k$ of degree $N$ is isomorphic to $\mathbb{P}^{N-1}$, and corresponds to the unique linear $\mathbb{P}^{N-1} \subset \mathbb{P}H^1(L_1^* L_2)$ that osculates to order $m_j-1$ at each $p_j \in X \subset \mathbb{P}H^1(L_1^*L_2)$.
\end{lemma}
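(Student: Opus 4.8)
The plan is to pass to the Serre dual picture and realise the Hecke-modification map $i^*$ as the transpose of a restriction-of-sections map, after which both the dimension claim and the osculating description reduce to statements about evaluation of jets. Throughout write $\mathcal{L} := L_2^* L_1 \otimes K$, so that $\deg \mathcal{L} = (d_1 - d_2) + 2g - 2$ and Serre duality gives $H^1(L_1^* L_2) \cong H^0(\mathcal{L})^*$. First I would identify $i^*$ itself. The defining inclusion $L_1[-D] \hookrightarrow L_1$ dualises and twists by $L_2$ to a sheaf inclusion $L_1^* L_2 \hookrightarrow L_1^* L_2(D) = (L_1[-D])^* L_2$, whose effect on $H^1$ is precisely $i^*$. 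Applying Serre duality to this morphism shows that $i^*$ is the transpose of the natural inclusion $\iota : H^0(\mathcal{L}(-D)) \hookrightarrow H^0(\mathcal{L})$ of those sections vanishing on $D$ to the prescribed orders, so that
\[
\ker(i^*) = \bigl(\operatorname{Im}\iota\bigr)^{\perp} = H^0(\mathcal{L}(-D))^{\perp} \subseteq H^0(\mathcal{L})^*.
\]

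Next I would settle the dimension by Riemann--Roch. Since $d_1 > d_2$ we have $\deg \mathcal{L} > 2g-2$, so $h^0(\mathcal{L}) = (d_1 - d_2) + g - 1$; the hypothesis $N < d_1 - d_2$ forces $\deg(\mathcal{L}(-D)) = (d_1 - d_2) + 2g - 2 - N > 2g-2$, whence $h^1(\mathcal{L}(-D)) = 0$ and $h^0(\mathcal{L}(-D)) = (d_1-d_2) + g - 1 - N$. As $\iota$ is injective,
\[
\dim \ker(i^*) = h^0(\mathcal{L}) - h^0(\mathcal{L}(-D)) = N,
\]
so the secant plane is isomorphic to $\mathbb{P}^{N-1}$.

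For the osculating description I would exploit the vanishing $h^1(\mathcal{L}(-D)) = 0$ once more. The sequence $0 \to \mathcal{L}(-D) \to \mathcal{L} \to \mathcal{L}|_D \to 0$, with $\mathcal{L}|_D \cong \bigoplus_{j} \mathcal{L}/\mathfrak{m}_{p_j}^{m_j}\mathcal{L}$ the length-$N$ sheaf of $(m_j-1)$-jets, shows that the jet-evaluation $\operatorname{ev} : H^0(\mathcal{L}) \to H^0(\mathcal{L}|_D)$ is surjective with kernel $H^0(\mathcal{L}(-D))$. Dualising, the image of $\operatorname{ev}^* : H^0(\mathcal{L}|_D)^* \hookrightarrow H^0(\mathcal{L})^*$ is exactly $H^0(\mathcal{L}(-D))^{\perp} = \ker(i^*)$. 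By the standard description of osculating spaces for an embedding by a complete linear system, the image of the summand $(\mathcal{L}/\mathfrak{m}_{p_j}^{m_j}\mathcal{L})^*$ under $\operatorname{ev}^*$ projectivises to the osculating $(m_j-1)$-plane of $X$ at $p_j$; surjectivity of $\operatorname{ev}$ guarantees these planes attain their expected dimensions and are in general position, so their span is all of $\mathbb{P}(\ker i^*) = \mathbb{P}^{N-1}$. Uniqueness is then immediate, since any linear $\mathbb{P}^{N-1}$ osculating to the stated orders must contain this span and hence coincide with it.

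The step I expect to be the main obstacle is the clean identification of $\operatorname{Im}(\operatorname{ev}^*)$ with the geometric osculating span, that is, matching the algebraic annihilator $H^0(\mathcal{L}(-D))^{\perp}$ with the union of osculating planes and checking that surjectivity of $\operatorname{ev}$ really forces the expected osculation order $m_j-1$ at each $p_j$. By contrast, the Serre-duality transposition and the Riemann--Roch bookkeeping are routine.
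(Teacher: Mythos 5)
Your proof is correct. For the dimension count it is essentially the Serre-dual of the paper's argument: the paper works directly upstairs with the long exact sequence of $0 \rightarrow L_1^*L_2 \rightarrow (L_1[-D])^*L_2 \rightarrow \bigoplus_j \mathbb{C}_{p_j}^{m_j} \rightarrow 0$, noting that $\deg (L_1[-D])^*L_2 = (d_2-d_1)+N < 0$ gives $h^0((L_1[-D])^*L_2)=0$, so the sequence collapses to $0 \rightarrow \mathbb{C}^N \rightarrow H^1(L_1^*L_2) \rightarrow H^1((L_1[-D])^*L_2) \rightarrow 0$ and $\dim_\mathbb{C}\ker i^* = N$ falls out at once. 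Your vanishing $h^1(\mathcal{L}(-D))=0$ is exactly that same vanishing read through Serre duality (it equals $h^0((L_1[-D])^*L_2)$), and your Riemann--Roch bookkeeping reproduces the same number, so this part differs only in packaging; the paper's version is a touch shorter because it never needs $h^0(\mathcal{L})$ itself. The genuine divergence is the osculation claim: the paper disposes of it in one line by citing Hitching's Prop.~2.4, whereas you prove it --- identifying $\ker i^*$ with the image of the dual jet-evaluation $\operatorname{ev}^*$, observing that the image of each summand $(\mathcal{L}/\mathfrak{m}_{p_j}^{m_j}\mathcal{L})^*$ is the cone over the osculating $(m_j-1)$-plane at $p_j$, and that injectivity of $\operatorname{ev}^*$ on the direct sum makes these planes independent and spanning in $\mathbb{P}(\ker i^*)$; uniqueness then follows since any $\mathbb{P}^{N-1}$ osculating to the stated orders must contain that span and hence coincide with it. This is precisely the content of the cited result, and the step you flagged as the likely obstacle is handled correctly by the surjectivity of $\operatorname{ev}$ (equivalently $h^1(\mathcal{L}(-D))=0$), which guarantees both the expected dimension of each osculating plane and their independence. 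What the paper's route buys is brevity; what yours buys is a self-contained argument that does not lean on the external reference.
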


\begin{proof}
The bound on $N$ shows that $\deg (L_1')^* L_2 = \deg L_1^* L_2 + N < 0$, and so $h^0((L_1')^*L_2) = 0$, in which case the long exact sequence for $0 \rightarrow L_1' \hookrightarrow L_1 \rightarrow \bigoplus_{j=1}^k \mathbb{C}^{m_j}_{p_j} \rightarrow 0$ reduces to
\begin{equation*}
0 \longrightarrow \mathbb{C}^N \longrightarrow H^1(L_1^* L_2) \stackrel{i^*}{\longrightarrow} H^1((L_1')^*L_2) \longrightarrow 0 .
\end{equation*}
Therefore $\dim_\mathbb{C} \ker i^* = N$, as required. 

The statement that the plane osculates to the correct order at each point $p_j \in X$ follows from \cite[Prop. 2.4]{Hitching13}.
\end{proof}

The notation for the above construction is written in terms of the notation for critical points of $f : \mathcal{M}_{Higgs}^{ss}(E) \rightarrow \mathbb{R}$ in order to be compatible with the results of the next section. To simplify the following, from now on let $L := L_1^* L_2$ with $\deg L < 0$.

If $0 < N < -\deg L$ satisfies the bound of the previous lemma, then the above construction defines an injective map
\begin{equation}\label{eqn:secant-divisor-plane}
\sec_N^L : S^N X \rightarrow \Gr(N, H^1(L)) 
\end{equation}
that takes an effective divisor of degree $N$ to $\ker i^* \subset H^1(L)$ (or equivalently, the secant plane in $\mathbb{P}H^1(L)$). 

Consider the tautological bundle $T \rightarrow \Gr(N, H^1(L))$, and let $T_0$ denote the complement of the zero section in $T$. There is a projection $\pi : T_0 \rightarrow \mathbb{P} H^1(L)$ which takes each point in a plane to its image in $\mathbb{P} H^1(L)$, and therefore there is a projection map
\begin{equation*}
(\sec_N^L)^* T_0 \rightarrow \mathbb{P}H^1(L) .
\end{equation*}

\begin{definition}[$N^{th}$ secant variety]\label{def:secant-variety}
Let $N$ satisfy the bound of Lemma \ref{lem:nondegenerate-secant-bound}. The \emph{$N^{th}$ secant variety of $X$ in $\mathbb{P}H^1(L)$}, denoted $\Sec_N^L(X)$, is the image of the projection $(\sec_N^L)^* T_0 \rightarrow \mathbb{P}H^1(L)$.

For each effective divisor $D = \sum_{j=1}^k m_j p_j$ of degree $N$, the secant plane of $D$ in $\mathbb{P}H^1(L)$ is denoted $\Pi_D^L \subset \Sec_N^L(X) \subset \mathbb{P}H^1(L)$. The associated plane in $H^1(L)$ is denoted $\widetilde{\Pi_D^L}$.
\end{definition}

\begin{remark}
The relationship between this construction and Schwarzenberger's secant bundle construction \cite{Schwarzenberger64} is explained in \cite{Coppens04}.

The construction above shows that $\Sec_N^L(X) \subset \mathbb{P}H^1(L)$ is the subvariety containing all of the points that lie in secant planes of total multiplicity $N$.
\end{remark}

Lemma \ref{lem:nondegenerate-secant-bound} then implies the following result.

\begin{corollary}\label{cor:linearly-independent-secants}
If $\deg D < d_1 - d_2 - 1$ and $p \in X$ is not in the support of $D$, then $\Pi_p^L \cap \Pi_D^L = \{ 0 \}$.
\end{corollary}

\begin{proof}
Let $N = \deg D$. Since $D + p$ satisfies the bound of Lemma \ref{lem:nondegenerate-secant-bound}, then $\Pi_{D+p}^L \cong \mathbb{P}^N$ and $\Pi_D^L \cong \mathbb{P}^{N-1}$. Since $p$ is not in the support of $D$, then $\Pi_{D+p}^L$ is the plane spanned by $\Pi_D^L$ and $p$. Therefore $p \notin \Pi_D^L$.
\end{proof}

\begin{lemma}\label{lem:intersection-secant-planes}
Let $L \rightarrow X$ be a line bundle with $\deg L < 0$ and let $D_1$ and $D_2$ be effective divisors on $X$ with $\deg D_1 + \deg D_2 < - \deg L$. Then the secant planes defined by $D_1$ and $D_2$ will intersect in $\mathbb{P}H^1(L)$ if and only if there exists an effective divisor $E$ such that $D_1 - E \geq 0$, $D_2 - E \geq 0$. The intersection $\widetilde{\Pi_{D_1}^{L}} \cap \widetilde{\Pi_{D_2}^{L}}$ is the plane $\widetilde{\Pi_E^{L}}$ for the maximal such choice of $E$.
\end{lemma}

\begin{proof}
Since $\deg D_1 + \deg D_2 < - \deg L$, then inductively applying Corollary \ref{cor:linearly-independent-secants} shows that if $D_1$ and $D_2$ have disjoint support, then $\widetilde{\Pi_{D_1}^{L}} \cap \widetilde{\Pi_{D_2}^{L}} = \{0\}$. Therefore, if the intersection of the two planes $\widetilde{\Pi_{D_1}^{L}}$ and $\widetilde{\Pi_{D_2}^{L}}$ has positive dimension, then there is an effective divisor $E > 0$ such that $E < D_1$ and $E < D_2$ such that  
\begin{equation}\label{eqn:subspace-of-intersection}
\widetilde{\Pi_E^{L}} \subset \widetilde{\Pi_{D_1}^{L}} \cap \widetilde{\Pi_{D_2}^{L}} ,
\end{equation}
and the above inclusion is an equality if $E$ is the maximal divisor such that $E < D_1$ and $E < D_2$.

Conversely, if there exists an effective divisor $E > 0$ with $E < D_1$ and $E < D_2$, then $\widetilde{\Pi_E^{L}} \subset \widetilde{\Pi_{D_1}^{L}}$ and $\widetilde{\Pi_E^{L}} \subset \widetilde{\Pi_{D_2}^{L}}$, so \eqref{eqn:subspace-of-intersection} is satisfied.
\end{proof}

The results of the next section show that spaces of flow lines can be parametrised by secant varieties in $\mathbb{P}H^1(L_1^*L_2)$ (cf. Lemma \ref{lem:flow-lines-secant}). The following open subset of points that do not lie on any secant planes of smaller dimension will parametrise unbroken flow lines
\begin{equation}\label{eqn:nondegenerate-secants}
\Sec_{N,0}^L(X) := \Sec_N^L(X) \setminus \Sec_{N-1}^L(X) .
\end{equation} 

Numerous authors have studied the singularities in $\Sec_N^L(X)$; see for example \cite{Bertram92}, \cite{Lange84}, \cite{Schwarzenberger64}. The precise statement we need in the sequel is the main theorem of \cite{Coppens04}.

\begin{lemma}[\cite{Coppens04}]\label{lem:secant-dimension}
If $N$ satisfies the bound of Lemma \ref{lem:nondegenerate-secant-bound}, then $\Sing \left( \Sec_N^L(X) \right) =  \Sec_{N-1}^L(X)$. In particular, $\Sec_{N,0}^L(X)$ is a smooth manifold of complex dimension $\dim_\mathbb{C} \Sec_{N,0}^L(X) = 2N-1$. 
\end{lemma}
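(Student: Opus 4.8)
The plan is to realise $\Sec_N^L(X)$ as the image of an explicit smooth resolution and to analyse the fibres of the resolution map using the intersection property of secant planes recorded after Definition \ref{def:secant-variety}. Since $N$ satisfies the bound of Lemma \ref{lem:nondegenerate-secant-bound}, every secant plane is a genuine $\mathbb{P}^{N-1}$, so the map \eqref{eqn:secant-divisor-plane} is a morphism $\sec_N^L \colon S^N X \to \Gr(N, H^1(L))$ of constant rank and $(\sec_N^L)^* T$ is a rank-$N$ bundle over the smooth projective variety $S^N X$ of dimension $N$. Its projectivisation $\widetilde{\Sec}_N^L := \mathbb{P}\big((\sec_N^L)^* T\big)$ is then a $\mathbb{P}^{N-1}$-bundle over $S^N X$, hence a smooth projective variety of dimension $2N-1$, and the tautological projection $\pi \colon T_0 \to \mathbb{P}H^1(L)$ induces a morphism $\rho \colon \widetilde{\Sec}_N^L \to \mathbb{P}H^1(L)$ whose image is exactly $\Sec_N^L(X)$ by Definition \ref{def:secant-variety}. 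In particular $\dim_\mathbb{C} \Sec_N^L(X) \le 2N-1$.

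Next I would pin down the dimension and the generic fibre. By the intersection property of secant planes (the Remark following Definition \ref{def:secant-variety}), a point $x \in \Sec_N^L(X)$ lying on no secant plane of total multiplicity $< N$ lies on a \emph{unique} plane $\Pi_D^L$ with $\deg D = N$; equivalently $\rho$ is injective over $\Sec_{N,0}^L(X) = \Sec_N^L(X) \setminus \Sec_{N-1}^L(X)$ from \eqref{eqn:nondegenerate-secants}. A projective morphism from an irreducible $(2N-1)$-dimensional variety that is injective over a dense open subset of its image is birational onto that image, so $\dim_\mathbb{C} \Sec_N^L(X) = 2N-1$ and $\Sec_{N,0}^L(X)$ is dense of that dimension.

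To obtain smoothness I would show $\rho$ is an immersion over $\Sec_{N,0}^L(X)$; combined with injectivity this makes $\rho$ a locally closed embedding there, so the image is smooth of dimension $2N-1$ and $\Sing(\Sec_N^L(X)) \subseteq \Sec_{N-1}^L(X)$. The differential of $\rho$ at $(D,x)$ is governed by a Terracini-type identification: its image is the linear span of the osculating data of $X \hookrightarrow \mathbb{P}H^1(L)$ along $D$ (for reduced $D = p_1 + \cdots + p_N$ this is $\big\langle T_{p_1} X, \ldots, T_{p_N} X \big\rangle$, with the osculating version from Lemma \ref{lem:nondegenerate-secant-bound} for general $D$). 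The bound $N < d_1 - d_2$ is exactly the statement that $A := L_2^* L_1 \otimes K$ is $(N-1)$-very ample, i.e. $H^0(A) \twoheadrightarrow H^0(A|_D)$ for every effective $D$ of degree $N$, and this very-ampleness is what forces the osculating span to attain the full projective dimension $2N-1$ away from $\Sec_{N-1}^L(X)$. For the reverse inclusion $\Sec_{N-1}^L(X) \subseteq \Sing(\Sec_N^L(X))$, I would argue by induction on $N$ using $\Sec_{N-1}^L(X) = \Sec_{N-1,0}^L(X) \sqcup \Sec_{N-2}^L(X)$: over $x \in \Sec_{N-1,0}^L(X)$, lying on the unique plane $\Pi_{D'}^L$ with $\deg D' = N-1$, the divisors $D' + q$ for $q \in X$ give a one-parameter family $\{\Pi_{D'+q}^L\}_{q \in X}$ of degree-$N$ secant planes all containing $x$, and I would use this family to compute that the tangent cone of $\Sec_N^L(X)$ at $x$ acquires multiplicity $>1$, as is classical for secant varieties of curves (cf. \cite{Bertram92}, \cite{Coppens04}); the inductive hypothesis then carries singularity to all of $\Sec_{N-1}^L(X)$.

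The main obstacle is the local computation in the third step: identifying the differential of $\rho$ and proving that the osculating span drops below the expected dimension \emph{exactly} on $\Sec_{N-1}^L(X)$ under the weak bound $N < d_1 - d_2$, which is genuinely weaker than the naive requirement $2N < d_1 - d_2$ that one would read off from doubling $D$. Tracking non-reduced divisors $D$, where the embedded tangent and osculating spaces must be handled via jets, is where the argument is most delicate. An alternative route to the same end is to construct an algebraic inverse $\Sec_{N,0}^L(X) \to \widetilde{\Sec}_N^L$, $x \mapsto (D_x, x)$, realising the unique divisor $D_x$ as the minimal divisor whose sections $H^0(A(-D_x))$ are annihilated by the functional $x \in \mathbb{P}H^0(A)^* \cong \mathbb{P}H^1(L)$; showing this is a morphism makes $\rho$ an isomorphism over $\Sec_{N,0}^L(X)$ directly. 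In either form this delicate point is precisely the content of the main theorem of \cite{Coppens04}, which one may alternatively invoke.
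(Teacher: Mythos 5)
First, a framing fact: the paper does not prove this lemma at all. It is imported verbatim as the main theorem of \cite{Coppens04} (the lemma is even labelled with that citation, and the preceding sentence says "the precise statement we need in the sequel is the main theorem of \cite{Coppens04}"). So your closing fallback --- invoke Coppens --- \emph{is} the paper's entire argument, and to that extent your proposal agrees with the paper. The easy parts of your sketch (the smooth $\mathbb{P}^{N-1}$-bundle $\mathbb{P}\bigl((\sec_N^L)^*T\bigr)$ over $S^NX$ of dimension $2N-1$, the bound $\dim_\mathbb{C}\Sec_N^L(X)\le 2N-1$, and generic injectivity giving equality) are fine and consistent with the paper's Definition \ref{def:secant-variety}.

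However, the self-contained route you propose for the core dichotomy $\Sing\bigl(\Sec_N^L(X)\bigr)=\Sec_{N-1}^L(X)$ would fail, and it fails exactly at the step you flag. Your immersion argument rests on the claim that the bound $N<d_1-d_2$, i.e.\ $(N-1)$-very ampleness of $A=L_2^*L_1\otimes K$, forces the Terracini/osculating span to attain dimension $2N-1$ away from $\Sec_{N-1}^L(X)$. This is not true: the Terracini span attached to a divisor $D$ of degree $N$ is the span of the \emph{degree-$2N$} divisor $2D$ (for reduced $D$ it is $\langle T_{p_1}X,\dots,T_{p_N}X\rangle = \Pi_{2D}^L$), and it has dimension $2N-1$ precisely when $2D$ imposes independent conditions on $|A|$; the computation in Lemma \ref{lem:nondegenerate-secant-bound} guarantees this only when $2N<d_1-d_2$, i.e.\ $(2N-1)$-very ampleness, not $(N-1)$-very ampleness. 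The same degree-$2N$ issue infects your injectivity step: the intersection property quoted after Definition \ref{def:secant-variety} requires $\max(D_1,D_2)$, of degree up to $2N$, to impose independent conditions. Under the weak bound alone the conclusion itself can fail: already for $N=2$, if four points of $X$ fail to span a $\mathbb{P}^3$ then two chords meet at a point $x\notin X$; the resolution is then finite and birational with a fibre of length $\ge 2$ over $x$, so $\Sec_2^L(X)$ is non-normal, hence singular, at $x\notin\Sec_1^L(X)$. So no argument using only the hypothesis as stated can close the gap --- the degree-$2N$ control assumed in \cite{Coppens04} is essential (and it does hold in the paper's actual applications to flow lines between nonminimal critical sets, where $\ell>\frac12\deg E$ gives $2(u-\ell)<d_1-d_2$). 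Your tangent-cone sketch for the reverse inclusion is likewise incomplete (a positive-dimensional family of linear spaces through a point does not by itself force a singularity), though that half follows classical lines. In short: your deferral to \cite{Coppens04} is not an optional convenience but the only load-bearing step, which is presumably why the paper simply quotes the result rather than proving it.
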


The above construction will be used to parametrise spaces of flow lines that appear within the unstable manifold of a single critical point $[L_1 \oplus L_2, \phi]$. This determines a fibre bundle over each critical set, with each fibre given by a secant variety as above for the bundle $L_1^*L_2$. This is made precise in the following definition.

\begin{definition}[Global secant variety]\label{def:secant-bundle}
Let $C_u$ be a nonminimal critical set and let $\mathbb{P}W_u^-$ be the projectivisation of the unstable manifold from Lemma \ref{lem:classify-unstable-set}. The \emph{$N^{th}$ global  secant variety over $C_u$} is the smooth fibre bundle $\mathcal{P}_{u-N}^u \hookrightarrow \mathbb{P}W_u \rightarrow C_u$ for which the fibre over $[L_1 \oplus L_2, \phi] \in C_u$ is $\Sec_{N}^{L_1^*L_2}(X) \subset \mathbb{P} H^1(L_1^*L_2)$.

The open subset for which the fibres are $\Sec_{N,0}^{L_1^*L_2}(X)$ is denoted $\mathcal{P}_{u-N,0}^u \subset \mathcal{P}_{u-N}^u$.
\end{definition}

\begin{remark}
The difference $\mathcal{P}_{u-N}^u \setminus \mathcal{P}_{u-N,0}^u$ is a fibre bundle over $C_u$ with fibre $\Sec_{N-1}^{L_1^*L_2}(X)$.
\end{remark}

The points in $\mathcal{P}_{u-N}^u$ parametrise a subset of the $\mathbb{C}^*$ equivalence classes in $W_{u,0}^-$. In order to parametrise a subset of the associated sphere bundle $S_u^- \subset \mathcal{M}_{Higgs}^{ss}(E)$, define the fibre bundles $\mathcal{S}_{u-N}^u \rightarrow C_u$ and $\mathcal{S}_{u-N,0}^u \rightarrow C_u$ by pullback
\begin{equation*}
\begin{tikzcd}
\mathcal{S}_{u-N,0}^u \arrow[hookrightarrow]{r} \arrow{d}{/S^1} & \mathcal{S}_{u-N}^u \arrow[hookrightarrow]{r} \arrow{d}{/S^1} & S_u^- \arrow{d}{/S^1} \\
\mathcal{P}_{u-N,0}^u \arrow[hookrightarrow]{r} & \mathcal{P}_{u-N}^u \arrow[hookrightarrow]{r} & \mathbb{P} W_{u,0} .
\end{tikzcd}
\end{equation*}

Given a fixed effective divisor $D$ of degree $N$, the bundles $\mathcal{P}_{u-N}^u$ and $\mathcal{S}_{u-N}^u$ each contain subbundles corresponding to the secant planes associated to $D$. In the sequel these will be denoted $\mathcal{P}_{D}^u$ and $\mathcal{S}_{D}^u$ respectively.

\begin{remark}
Let $\ell = u-N$. The results of the next section will show that $\mathcal{S}_{\ell,0}^u$ consists of Higgs pairs that flow down to the critical set $C_\ell$ and $\mathcal{S}_{\ell}^u \setminus \mathcal{S}_{\ell,0}^u$ consists of Higgs pairs that flow down to an intermediate critical set $C_m$ for $\ell < m < u$.
\end{remark}

\section{Classification of flow lines}\label{sec:flow-line-classification}

The goal of this section is to prove Theorem \ref{thm:space-unbroken-flow-lines}, which gives a new criterion to predict the downwards limit of the flow with initial condition in the unstable set of a critical point $[L_1 \oplus L_2, \phi]$. The criterion is geometric in nature, in that it is given by the secant varieties of the embedding $X \hookrightarrow \mathbb{P}H^1(L_1^* L_2)$. This method has previously been used to classify Yang-Mills flow lines \cite{Wilkin20}, and here we show that a similar idea can be used to classify all of the flow lines in the rank two moduli space $\mathcal{M}_{Higgs}^{ss}(E)$.

\subsection{Harder-Narasimhan types in the unstable manifold}

Let $L_1, L_2$ be line bundles with $\deg L_1 > \deg L_2$ and $h^0(L_1^*L_2 \otimes M) \neq 0$, so that there exists a variation of Hodge structure $[L_1 \oplus L_2, \phi \in H^0(L_1^* L_2 \otimes M)]$ corresponding to a fixed point of the $\mathbb{C}^*$ action. Lemma \ref{lem:classify-unstable-set} shows that points in the unstable manifold for the flow are determined by extensions
\begin{equation}\label{eqn:extension-unstable}
0 \rightarrow L_2 \rightarrow E \rightarrow L_1 \rightarrow 0 .
\end{equation}
The results of \cite{Hitchin87} show that the limit of the downwards flow is determined by the Harder-Narasimhan type of $E$, and this is made more precise by Hausel and Hitchin in \cite[Sec. 4.2.3]{HauselHitchin22}. The goal of this section is to explain how the limit is determined by the geometry of the extension class $[e] \in \mathbb{P} H^1(L_1^*L_2)$ for \eqref{eqn:extension-unstable}, which leads to a description of the space of unbroken flow lines connecting two critical points in terms of secant varieties (cf. Lemma \ref{lem:flow-lines-secant}).

First note that since $\dim_\mathbb{C} X = 1$, then with respect to the extension \eqref{eqn:extension-unstable}, if $L_1' \hookrightarrow E$ is a subbundle with $\deg L_1' > \deg L_2$, the composition $L_1' \hookrightarrow E \rightarrow L_1$ makes $L_1'$ a  locally free subsheaf of $L_1$. Since both are locally free of rank one, then there is an exact sequence of sheaves
\begin{equation*}
0 \rightarrow L_1' \stackrel{i}{\hookrightarrow} L_1 \rightarrow \bigoplus_{k=1}^\ell \mathbb{C}_{p_k}^{m_k} \rightarrow 0 .
\end{equation*}
Conversely, a subsheaf $L_1' \stackrel{i}{\hookrightarrow} L_1$ lifts to a subsheaf of $E$ if $e \in H^1(L_1^* L_2)$ is in the kernel of the pullback homomorphism
\begin{equation}\label{eqn:pullback-kernel}
H^1(L_1^*L_2) \stackrel{i^*}{\longrightarrow} H^1((L_1')^*L_2) 
\end{equation}
(see \cite{NarasimhanRamanan69}). Note that since $e \neq 0$, then this descends to a condition on the equivalence class $[e] \in \mathbb{P}H^1(L_1^* L_2)$. 

The resulting subsheaf $L_1' \hookrightarrow E$ will be a subbundle if and only if it is saturated, so that there is no intermediate sheaf $L_1''$ with $\deg L_1'' > \deg L_1'$ that satisfies $L_1' \hookrightarrow L_1'' \hookrightarrow L_1$ such that $e \in H^1(L_1^*L_2)$ pulls back to zero in $H^1((L_1'')^*L_2)$. 

Since $\deg L_1 > \deg L_2$, then $L_2^*L_1 \otimes K$ is very ample and so there is an embedding of the underlying Riemann surface $F : X \hookrightarrow \mathbb{P}H^0(L_2^*L_1 \otimes K)^* \cong \mathbb{P}H^1(L_1^*L_2)$. Any point $p \in X$ determines a Hecke modification $L_1[-p] \hookrightarrow L_1$ and the image $F(p) \in \mathbb{P}H^1(L_1^*L_2)$ determines a line through the origin in $H^1(L_1^*L_2)$ which is the kernel of the pullback homomorphism $H^1(L_1^*L_2) \rightarrow H^1(L_1[-p]^*L_2)$. 

More generally, an effective divisor $D = \sum_{k=1}^n m_k p_k$ with degree $m_1 + \cdots + m_n \leq \frac{1}{2} (\deg L_1 - \deg L_2)$ determines a Hecke modification $L_1' = L_1[-D]$ and a plane in $\mathbb{P}H^1(L_1^*L_2)$ via Lemma \ref{lem:nondegenerate-secant-bound}. Any point $[e]$ in this plane determines a line in $H^1(L_1^* L_2)$ that pulls back to zero in $H^1((L_1')^*L_2)$. Equivalently, by viewing $[e] \in \mathbb{P}H^1(L_1^*L_2)$ as an extension class, then $[e]$ determines a bundle $E$ with a subsheaf $L_1'$. 

This process is summarised in the diagram below.
\begin{equation}\label{eqn:subbundle-criterion}
\begin{tikzcd}
 & & & 0 \arrow{d} & 0 \arrow{dl} \\
 & 0 \arrow{d} & & L_1' \arrow{d} \arrow{dl} \\
0 \arrow{r} & L_2 \arrow{r} \arrow{d} & E \arrow{r} \arrow{dl} & L_1 \arrow{r} \arrow{d} & 0 \\
 & L_2' \arrow{d} \arrow{dl} & &  \bigoplus_{k=1}^n \mathbb{C}_{p_k}^{m_k} \arrow{d} \\
0 &  \bigoplus_{k=1}^n \mathbb{C}_{p_k}^{m_k} \arrow{d} & & 0 \\
 & 0
\end{tikzcd}
\end{equation}

\begin{lemma}\label{lem:subbundle-criterion}
This subsheaf $L_1' \hookrightarrow E$ is a subbundle of $E$ if and only if both of the following are true
\begin{enumerate}

\item \label{item:full-subset} $[e]$ does not lie on a plane through a proper subset of the points $F(p_1), \ldots, F(p_\ell)$, and

\item \label{item:maximal-osculation} $[e]$ does not lie on a plane through $F(p_1), \ldots, F(p_\ell)$ that osculates to order strictly less than $m_j-1$ at $F(p_j)$ for at least one $j$. 
\end{enumerate}
\end{lemma}

\begin{proof}
The subsheaf $L_1' \hookrightarrow E$ is a subbundle if and only if it is saturated. This occurs if and only if there is no effective divisor $D' < D$ such that $[e]$ lies on the plane $\Pi_{D'}^{L_1^*L_2}$. The condition $D' < D$ shows that the plane $\Pi_{D'}$ is either a plane through a proper subset of the points $F(p_1), \ldots, F(p_\ell)$, or a plane through $F(p_1), \ldots, F(p_\ell)$ that osculates to order strictly less than $m_j-1$ at $F(p_j)$ for at least one $j$. Therefore $L_1' \hookrightarrow E$ is saturated if and only if conditions \eqref{item:full-subset} and \eqref{item:maximal-osculation} are both true.
\end{proof}

In particular, if $\deg D = \sum_k m_k < \frac{1}{2} (\deg L_1 - \deg L_2)$ such that $\deg L_1' > \frac{1}{2} \deg E$, then the uniqueness of the Harder-Narasimhan filtration shows that there is a unique such subbundle of maximal degree.

\begin{lemma}\label{lem:HN-type-extension}
Let $0 \rightarrow L_2 \rightarrow E \rightarrow L_1 \rightarrow 0$ be an extension with $\deg L_1 > \deg L_2$, let $D$ be an effective divisor such that $\deg D < \frac{1}{2} (\deg L_1 - \deg L_2)$ and suppose that the extension class lies in the kernel of the pullback \eqref{eqn:pullback-kernel}. If $[e] \in \mathbb{P}H^1(L_1^*L_2)$ satisfies the conditions of Lemma \ref{lem:subbundle-criterion}, then $E$ has Harder-Narasimhan filtration
\begin{equation}\label{eqn:HN-filtration}
\{0\} \subset L_1' \subset E .
\end{equation}
\end{lemma}

\begin{proof}
The diagram \eqref{eqn:subbundle-criterion} shows that $L_1'$ is a subsheaf of $E$, and conditions \eqref{item:full-subset} and \eqref{item:maximal-osculation} show that this subsheaf is a subbundle. The bound on $\deg D$ implies that $\deg L_1' > \frac{1}{2} \deg E$ and so $L_1'$ is a destabilising subbundle. 

To see that $L_1'$ is the \emph{maximal} destabilising subbundle, note that if there exists another effective divisor $D'$ with $\deg D' < \deg D$ and $L_1[-D'] \subset E$, then 
\begin{equation*}
e \in \left( \ker (H^1(L_1^*L_2) \rightarrow H^1(L_1[-D']^*L_2)) \right) \cap \left( \ker (H^1(L_1^*L_2) \rightarrow H^1(L_1[-D]^*L_2)) \right)
\end{equation*}
 implies that the two planes $\Pi_{D'}^{L_1^*L_2}$ and $\Pi_D^{L_1^*L_2}$ have nontrivial intersection. Therefore Lemma \ref{lem:intersection-secant-planes} shows that there is an effective divisor $D'' > 0$ such that $D'' < D$ and $D'' < D'$ and $e \in \ker (H^1(L_1^*L_2) \rightarrow H^1(L_1[-D'']^*L_2))$, which contradicts Lemma \ref{lem:subbundle-criterion}.

Therefore we conclude that there is no effective divisor $D'$ such that $\deg D' < \deg D$ and $e \in \ker (H^1(L_1^*L_2) \rightarrow H^1(L_1[-D']^*L_2))$, and so $L_1' = L_1[-D]$ has the largest possible degree for a subbundle of $E$. Therefore $L_1' \hookrightarrow E$ is the maximal destabilising subbundle, from which we can conclude that the Harder-Narasimhan filtration is \eqref{eqn:HN-filtration}.
\end{proof}

The following lemma relates this to the secant varieties of the previous section. 

\begin{lemma}\label{lem:flow-lines-secant}
Let $E \rightarrow X$ be a rank $2$ complex vector bundle, let $u$ be an integer in the range $\frac{1}{2} \deg E < u \leq \frac{1}{2} \left( \deg E + \deg M \right)$, let $[L_1 \oplus L_2, \phi] \in C_u$ and let $D$ be an effective divisor with $\deg D < \frac{1}{2}(\deg L_1 - \deg L_2)$. Then the subset of the unstable manifold $W_{[L_1 \oplus L_2, \phi]}^-$ consisting of pairs $[E, \phi']$ where $E$ is isomorphic to an extension of line bundles
\begin{equation*}
0 \rightarrow L_1[-D] \rightarrow E \rightarrow L_2[D] \rightarrow 0
\end{equation*}
is given by $\pi^{-1} (\Sec_{\deg D,0}^{L_1^*L_2}(X))$, where $\pi$ denotes the projection $\pi : W_{[L_1 \oplus L_2, \phi]}^- \setminus \{0\} \rightarrow \mathbb{P}W_{[L_1 \oplus L_2, \phi]}^-$.
\end{lemma}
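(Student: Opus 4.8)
The statement collects the bundle-theoretic facts established earlier in this section, so the plan is to translate the two geometric conditions cutting out $\mathcal{W}^D_{[L_1\oplus L_2,\phi]}$ into the existence of the claimed extension, and conversely. First I would recall from Lemma \ref{lem:classify-unstable-set} that a point of $W^-_{[L_1\oplus L_2,\phi]}$ is given by an extension class $e\in H^1(L_1^*L_2)$, determining $0\to L_2\to E\to L_1\to 0$, with $\phi'$ the image of $\phi$ under $L_1^*L_2\hookrightarrow\End(E)$. Since $N:=\deg D<\tfrac12(\deg L_1-\deg L_2)$ gives $\deg L_1[-D]>\deg L_2$, any sub-line-bundle $\bar L\subset E$ of degree $\deg L_1[-D]$ has nonzero composite $\bar L\hookrightarrow E\twoheadrightarrow L_1$ (otherwise $\bar L\subset L_2$, forcing $\deg\bar L\le\deg L_2$), hence injects as a subsheaf of $L_1$. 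Thus the existence of an extension $0\to L_1[-D]\to E\to L_2[D]\to 0$ is equivalent to the Hecke subsheaf $L_1[-D]\hookrightarrow L_1$ lifting to a sub-line-bundle of $E$; the diagram \eqref{eqn:subbundle-criterion} then identifies the quotient as exactly $L_2[D]$ by the snake lemma, so no extra condition on the quotient is imposed.

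Next I would turn ``lifting to a subsheaf'' into membership in a secant plane. By the extension-theoretic criterion of \cite{NarasimhanRamanan69} used above, the subsheaf $L_1[-D]\hookrightarrow L_1$ lifts to a subsheaf of $E$ if and only if $e$ lies in $\ker\bigl(H^1(L_1^*L_2)\to H^1((L_1[-D])^*L_2)\bigr)$, and Lemma \ref{lem:nondegenerate-secant-bound} identifies the projectivisation of this kernel with the secant plane $\Pi_D^{L_1^*L_2}$. Hence the lifting condition is exactly $[e]\in\Pi_D^{L_1^*L_2}$.

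The remaining and most delicate step is to show that the lifted subsheaf is \emph{saturated} (a genuine subbundle) precisely when $[e]\in\Sec_{N,0}^{L_1^*L_2}(X)$. The saturation of $L_1[-D]$ in $E$ is a sub-line-bundle of the form $L_1[-D'']$ with $D''\le D$, and, using the osculation description of secant planes in Lemma \ref{lem:nondegenerate-secant-bound} together with the intersection formula recorded in the remark following Definition \ref{def:secant-variety}, the maximal such $D''$ is characterised by $[e]\in\Pi_{D''}^{L_1^*L_2}$. The subsheaf is saturated iff $D''=D$; if instead $D''<D$ then $\deg D''<N$, so $[e]\in\Pi_{D''}^{L_1^*L_2}\subset\Sec_{N-1}^{L_1^*L_2}(X)$, and by \eqref{eqn:nondegenerate-secants} saturation is equivalent to $[e]\notin\Sec_{N-1}^{L_1^*L_2}(X)$, i.e. $[e]\in\Sec_{N,0}^{L_1^*L_2}(X)$. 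I expect this equivalence between saturation of the subsheaf and nondegeneracy with respect to the lower secant varieties to be the main obstacle, since it requires controlling exactly when $[e]$ can lie simultaneously on $\Pi_D^{L_1^*L_2}$ and on a smaller secant plane.

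Finally, combining the two conditions yields $[e]\in\Pi_D^{L_1^*L_2}\cap\Sec_{N,0}^{L_1^*L_2}(X)$. Since the Higgs field $\phi'$ on $W^-_{[L_1\oplus L_2,\phi]}$ is fixed by Lemma \ref{lem:classify-unstable-set}, and recovering the nonzero class $e$ from $[e]$ accounts for the scaling fibre $H^1(L_1^*L_2)\setminus\{0\}\to\mathbb{P}H^1(L_1^*L_2)$, the subset of $W^-_{[L_1\oplus L_2,\phi]}$ in question is exactly the pullback of $H^1(L_1^*L_2)\setminus\{0\}$ over $\Pi_D^{L_1^*L_2}\cap\Sec_{N,0}^{L_1^*L_2}(X)$. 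This is precisely the definition of $\mathcal{W}^D_{[L_1\oplus L_2,\phi]}$, completing the identification.
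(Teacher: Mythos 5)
Your proposal is correct and follows essentially the same route as the paper's own argument (the discussion in Section \ref{sec:flow-line-classification} preceding the lemma, which the lemma merely summarises): identify points of the unstable manifold with extension classes via Lemma \ref{lem:classify-unstable-set}, use the Narasimhan--Ramanan criterion to equate lifting of the Hecke modification $L_1[-D] \hookrightarrow L_1$ with $[e]$ lying in the secant plane $\Pi_D^{L_1^*L_2}$, and characterise when the lifted subsheaf is saturated by $[e]$ avoiding lower secant planes, with the $\mathbb{C}^*$-scaling fibre accounting for the passage from $[e]$ to $e$. If anything, your treatment of the saturation step (using the intersection formula for secant planes to show that ``not saturated'' is equivalent to $[e] \in \Sec_{N-1}^{L_1^*L_2}(X)$, hence that the correct locus is $\Pi_D^{L_1^*L_2} \cap \Sec_{N,0}^{L_1^*L_2}(X)$) is slightly more detailed than the paper's, which states the subbundle criterion in terms of planes through proper subdivisors of $D$ and defers the details to \cite{Wilkin20}.
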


\subsection{Higgs pairs in the limit of the downwards flow}\label{subsec:explicit-gauge}

The results of the previous section classify the Harder-Narasimhan filtrations in each unstable set $W_u^-$, however this only determines the holomorphic bundle underlying the Higgs pair in the lower limit of the flow, which is determined by the extension from diagram \eqref{eqn:subbundle-criterion}. The goal of this section is to give an explicit description of the gauge transformation that determines this isomorphism. This is motivated by the constructions in \cite[Sec. 4.5]{Witten18} and \cite{Wilkin20} which also give an explicit description of the effect of a Hecke modification on a Higgs field. The difference here is that one can compose the two Hecke modifications that appear in the diagram \eqref{eqn:subbundle-criterion} in such a way as to construct a smooth gauge transformation which induces an isomorphism of Higgs pairs. 

On writing the Higgs pair as an extension of bundles $0 \rightarrow L_2 \rightarrow E \rightarrow L_1 \rightarrow 0$ with Higgs field $\phi \in H^0(L_1^*L_2 \otimes M)$, in this gauge it is easy to see the limit of the upwards flow, however the limit of the downwards flow is not obvious. After changing gauge in this way, from this new point of view the limit of the downwards flow appears via a simple calculation (see Lemma \ref{lem:downwards-flow-criterion} below).

In preparation for Lemma \ref{lem:downwards-flow-criterion}, first consider the case of a Hecke modification of multiplicity $m$ at a single point $p \in X$. Let $U$ be a coordinate neighbourhood with coordinate $z$ centred at $p$. Choose open neighbourhoods $V, W$ of $p$ with $\overline{W} \subset V$, $\overline{V} \subset U$ and a $C^\infty$ bump function $\eta : X \rightarrow \mathbb{R}_{\geq 0}$ such that $\eta \equiv 0$ on $W$ and $\eta \equiv 1$ on $X \setminus V$. Then
\begin{equation*}
\omega = \frac{\bar{\partial} \eta}{z^m} \in \Omega^{0,1}(X)
\end{equation*}
defines a smooth one form on $X$. In fact, since the support of $\omega$ is contained in a coordinate neighbourhood, then $\omega \in  \Omega^{0,1}(L)$ for any line bundle $L$ trivialised over $U$; in particular $[\omega]$ defines a cohomology class in $H^{0,1}(L_1^*L_2)$.

Let $\gamma : [0,1] \rightarrow X$ be a loop contained in $U \setminus V$ which has winding number one around the point $p$. Serre duality then identifies the Dolbeault cohomology class $[\omega] \in H^{0,1}(L_1^*L_2)$ with an element of $H^0(L_2^*L_1 \otimes K)^*$ given by
\begin{multline}\label{eqn:residue-class}
[\omega](s) := \int \hspace{-0.12 in} \int_X \frac{(\bar{\partial} \eta) s}{z^m} = \int \hspace{-0.12 in} \int_U \frac{(\bar{\partial} \eta) s}{z^m} \\ 
= \int_\gamma \frac{\eta s}{z^m} = \int_\gamma \frac{s}{z^m} = 2 \pi i \Res_{z=0} \left( \frac{s}{z^m} \right) \quad \text{for all $s \in H^0(L_2^*L_1 \otimes K)$} .
\end{multline}


When $m=1$, then this is a scalar multiple of the evaluation map at the point $p$, and therefore $[\omega] \in H^{0,1}(L_1^*L_2) \cong H^0(L_2^*L_1 \otimes K)^*$ corresponds to the image of $p\in X$ in $\mathbb{P}H^0(L_2^*L_1 \otimes K)^*$. When $m > 1$ then \eqref{eqn:residue-class} determines an $m$-dimensional subspace 
\begin{equation*}
\spanC \{ [\omega], [z \omega], \ldots, [z^{m-1} \omega] \} \subset H^0(L_2^*L_1 \otimes K)^*
\end{equation*}
which corresponds to the plane in $\mathbb{P}H^0(L_2^*L_1 \otimes K)^*$ that osculates to order $m-1$ to the image of $X$ at the point $p \in X$.

Now consider an extension $0 \rightarrow L_2 \rightarrow E \rightarrow L_1 \rightarrow 0$ with extension class $[\omega]$. On the trivalisation over $U$, this corresponds to the holomorphic structure
\begin{equation*}
\bar{\partial}_A = \bar{\partial} + \left( \begin{matrix} 0 & \frac{\bar{\partial} \eta}{z^m} \\ 0 & 0 \end{matrix} \right) ,
\end{equation*}
where the matrix (which is only nontrivial over $U$) is defined using the basis on $\left. E \right|_U$ given by the $C^\infty$ bundle isomorphism $\left. E \right|_U \cong \left. L_2 \right|_U \oplus \left. L_1 \right|_U$. Let $\varphi \in H^0(L_1^*L_2 \otimes M) \setminus \{0\}$ so that 
\begin{equation*}
\phi = \left( \begin{matrix} 0 & \varphi \\ 0 & 0 \end{matrix} \right) .
\end{equation*}
Then Lemma \ref{lem:classify-unstable-set} shows that $[\bar{\partial}_A, \phi]$ is a Higgs pair in the unstable manifold of the critical point $[L_1 \oplus L_2, \phi]$.

Let $L_1' = L_1[-mp]$. On a trivialisation over the neighbourhood $U$, the pullback of $\frac{\bar{\partial} \eta}{z^m}$ to $\Omega^{0,1}((L_1')^*L_2)$ is given by applying a meromorphic gauge transformation
\begin{equation*}
\left( \begin{matrix} 1 & 0 \\ 0 & z^{-m} \end{matrix} \right) \left( \begin{matrix} 0 & \frac{\bar{\partial} \eta}{z^m} \\ 0 & 0 \end{matrix} \right) \left( \begin{matrix} 1 & 0 \\ 0 & z^m \end{matrix} \right) = \left( \begin{matrix} 0 & \bar{\partial} \eta \\ 0 & 0 \end{matrix} \right)
\end{equation*}
to obtain an exact $(0,1)$ form. To trivialise this, one can apply a smooth gauge transformation
\begin{equation*}
\left( \begin{matrix} 1 & \eta-1 \\ 0 & 1 \end{matrix} \right) \left( \begin{matrix} 0 & \bar{\partial} \eta \\ 0 & 0 \end{matrix} \right) \left( \begin{matrix} 1 & 1-\eta \\ 0 & 1 \end{matrix} \right) - \left( \begin{matrix} 0 & \bar{\partial} \eta \\ 0 & 0 \end{matrix} \right) \left( \begin{matrix} 1 & 1-\eta \\ 0 & 1 \end{matrix} \right) = \left( \begin{matrix} 0 & 0 \\ 0 & 0 \end{matrix} \right) 
\end{equation*}
to obtain a trivial holomorphic structure over $U$. Therefore, gauging by the singular gauge transformation
\begin{equation*}
g_1 = \left( \begin{matrix} 1 & \eta-1 \\ 0 & 1 \end{matrix} \right) \left( \begin{matrix} 1 & 0 \\ 0 & z^{-m} \end{matrix} \right) = \left( \begin{matrix} 1 & z^{-m} (\eta-1) \\ 0 & z^{-m} \end{matrix} \right)
\end{equation*}
on a trivialisation over $U$ shows that the pullback of the extension $0 \rightarrow L_2 \rightarrow E \rightarrow L_1 \rightarrow 0$ is a direct sum $L_2 \oplus L_1'$.

In a similar way, on the trivialisation over $U$ one can gauge $L_2 \oplus L_1'$ by another singular gauge transformation
\begin{equation*}
g_2 = \left( \begin{matrix} z^m & 0 \\ 0 & 1 \end{matrix} \right) \left( \begin{matrix} 1 & 0 \\ 1-\eta & 1 \end{matrix} \right) = \left( \begin{matrix} z^m & 0 \\ 1-\eta & 1 \end{matrix} \right)
\end{equation*}
to obtain an extension $0 \rightarrow L_1[-mp] \rightarrow E' \rightarrow L_2[mp] \rightarrow 0$ corresponding to the diagonal exact sequence in the diagram \eqref{eqn:subbundle-criterion}. \emph{A priori} this bundle $E'$ may not be isomorphic to $E$, however the composition of these gauge transformations is
\begin{equation}\label{eqn:smooth-composition}
g_2 g_1 = \left( \begin{matrix} z^m & \eta-1 \\ 1-\eta & z^{-m} (1-(1-\eta)^2) \end{matrix} \right) ,
\end{equation}
which is a $C^\infty$ gauge transformation, since the point $\{z=0\}$ is contained in the neighbourhood $W$ where $\eta \equiv 0$, in which case we have
\begin{equation*}
\left. g_2 g_1 \right|_{W} = \left( \begin{matrix} z^m & 1 \\ -1 & 0 \end{matrix} \right) .
\end{equation*}
In particular, $g_2 g_1$ defines an isomorphism of Higgs pairs $(E', \phi') := g_2 g_1 \cdot (E, \phi)$

In summary, the above construction gives an explicit gauge theoretic construction of a family of Higgs fields in $\mathcal{B}^{ss}$ that represent a flow line in $\mathcal{M}_{Higgs}^{ss}(E)$, from which we have a new proof of the results of \cite[Sec. 4.2.3]{HauselHitchin22} on the limit of the downwards flow.

\begin{lemma}\label{lem:downwards-flow-criterion}
Let $[L_1 \oplus L_2, \phi_\infty] \in C_u$ and consider a Higgs pair in the unstable manifold $W_u^-$ for which the underlying holomorphic bundle is an extension $0 \rightarrow L_2 \rightarrow E \rightarrow L_1 \rightarrow 0$ which is isomorphic to $0 \rightarrow L_1[-D] \rightarrow E \rightarrow L_2[D] \rightarrow 0$ via \eqref{eqn:subbundle-criterion} for an effective divisor $D$ such that $\deg D < \frac{1}{2}(\deg L_1 - \deg L_2)$. Then the limit of the downwards flow is the critical point $[L_1[-D] \oplus L_2[D], \phi_0]$, where $\phi_0 \in H^0(L_1^* L_2[2D] \otimes M)$ is the image of $\phi_\infty \in H^0(L_1^*L_2 \otimes M)$ via the sheaf homomorphism $L_1^*L_2 \otimes M \hookrightarrow L_1^* L_2[2D] \otimes M$.
\end{lemma}

\begin{proof}
In a local trivialisation around each point $p_k$ with multiplicity $m_k$, the construction above determines a smooth gauge transformation $g_2 g_1$ from \eqref{eqn:smooth-composition} such that
\begin{equation*}
\phi' = (g_2 g_1) \cdot \phi \in H^0((L_1[m_kp_k])^* L_2[m_kp_k] \otimes K) .
\end{equation*}
A computation shows that
\begin{align*}
(g_2 g_1) \cdot \phi & = \left( \begin{matrix} z^{m_k} & \eta-1 \\ 1-\eta & z^{-m_k} (1 - (1-\eta)^2) \end{matrix} \right) \left( \begin{matrix} 0 & \varphi \\ 0 & 0 \end{matrix} \right) \left( \begin{matrix} z^{-m_k} (1 - (1-\eta)^2) & 1-\eta \\ \eta-1 & z^{m_k} \end{matrix} \right) \\
 & = \left( \begin{matrix} (\eta-1) z^{m_k} \varphi & z^{2m_k} \varphi \\ -(1-\eta)^2 \varphi & (1-\eta) z^{m_k} \varphi \end{matrix} \right)
\end{align*}
Now let $[L_1' \oplus L_2', \phi_0]$ denote the critical point at the lower limit of the downwards flow. This flow is given by scaling the Higgs field $\phi \mapsto e^{-2t} \phi$ and applying a complex gauge transformation to preserve Hitchin's equations \eqref{eqn:hitchin-equations}. In the situation under consideration, this gauge transformation has the effect of scaling the extension class to zero. On the Higgs field $\phi'$ this has the form
\begin{align*}
\phi'(t) & = e^{-2t} \left( \begin{matrix} e^t & 0 \\ 0 & e^{-t} \end{matrix} \right) \left( \begin{matrix} (\eta-1) z^{m_k} \varphi & z^{2m_k} \varphi \\ -(1-\eta)^2 \varphi & (1-\eta) z^{m_k} \varphi \end{matrix} \right) \left( \begin{matrix} e^{-t} & 0 \\ 0 & e^t \end{matrix} \right) \\
 & = \left( \begin{matrix} e^{-2t} (\eta-1) z^{m_k} \varphi &  z^{2m_k} \varphi \\ -e^{-4t} (1-\eta)^2 \varphi & e^{-2t} (1-\eta) z^{m_k} \varphi \end{matrix} \right) \\
\Rightarrow \quad \lim_{t \rightarrow \infty} \phi'(t) & = \left( \begin{matrix} 0 &  z^{2m_k} \varphi \\ 0 & 0 \end{matrix} \right) =: \phi_0 .
\end{align*}
Therefore we see that the Higgs field in the limit of the flow now has an extra zero of order $2m_k$ at each point $p_k$.
 
The general case works in the same way, by using disjoint coordinate neighbourhoods $U_k$ of each $p_k$ to construct an extension class associated to each effective divisor $D = \sum_{k=1}^n m_k p_k$ satisfying $\deg D < \frac{1}{2} (\deg L_1 - \deg L_2)$. The same process as above shows that the limit $\phi_0$ must have a zero of order $2 m_k$ at each point $p_k \in X$ for all $k = 1, \ldots, n$. 
\end{proof}

Conversely, given a critical point $[L_1' \oplus L_2', \phi_0] \in C_\ell$, the same method can be used to describe all points in the upwards limit of a flow line emanating from $[L_1' \oplus L_2', \phi_0]$. This result was obtained using a different method of proof by Hausel and Hitchin in \cite[Sec. 4.2.3]{HauselHitchin22}.

\begin{corollary}\label{cor:upwards-flow-criterion}
Let $[L_1' \oplus L_2', \phi_0] \in C_\ell$, let $D$ be an effective divisor with degree bounded by 
\begin{equation*}
0 < \deg D < \frac{1}{2} \left( \deg E + \deg M \right) - \ell
\end{equation*}
and suppose that $\phi_0 \in H^0((L_1')^*L_2' \otimes M)$ is the image of some $\phi_0 \in H^0(L_1'^* L_2'[-2D] \otimes M)$ under the homomorphism $H^0((L_1')^* L_2'[-2D] \otimes M) \hookrightarrow H^0((L_1')^*L_2' \otimes M)$. Then there exists a flow line connecting $[L_1' \oplus L_2', \phi_0] \in C_\ell$ and $[L_1'[D] \oplus L_2'[-D], \phi_0] \in C_{\ell+\deg D}$.
\end{corollary}

\begin{proof}
Lemma \ref{lem:downwards-flow-criterion} shows that for all effective divisors $D$ satisfying the above degree bound, one can construct a flow line between $[L_1'[D] \oplus L_2'[-D], \phi_\infty]$ and $[L_1' \oplus L_2', \phi_0] \in C_\ell$.
\end{proof} 

\subsection{Classification of flow lines}

Using the results of the previous section, we can now classify all the flow lines between two critical points.
\begin{corollary}\label{cor:pointwise-flow-lines}
Let $[L_1' \oplus L_2', \phi_0] \in C_d$ and $[L_1'[D] \oplus L_2'[-D], \phi_\infty] \in C_{d+\deg D}$ be critical points with $\phi_0 \in H^0((L_1')^*L_2' \otimes M)$ the image of $\phi_\infty \in H^0(L_1'^* L_2'[-2D] \otimes M)$ under the homomorphism $H^0(L_1'^* L_2'[-2D] \otimes M) \hookrightarrow H^0((L_1')^*L_2' \otimes M)$. Modulo the $S^1$ action, the space of all flow lines between these critical points is parametrised by the open subset 
\begin{equation*}
\Pi_D^{L_1'^* L_2'[-2D]} \cap \Sec_0^N(X) \subset \Pi_D^{L_1'^* L_2'[-2D]} \subset \mathbb{P} H^1((L_1')^* L_2'[-2D]) .
\end{equation*}
\end{corollary}

\begin{proof}
Lemma \ref{lem:HN-type-extension} shows that the secant planes in the projectivisation of the unstable manifold determine the Harder-Narasimhan type. Lemma \ref{lem:downwards-flow-criterion} and Corollary \ref{cor:upwards-flow-criterion} show that the Harder-Narasimhan type determines the limit of the downwards flow, from which we obtain the desired result.
\end{proof}

In order to state the next theorem, let $C_\ell$ and $C_u$ be two critical sets with $f(C_\ell) < f(C_u)$, and recall the sphere bundle $S_u^- \rightarrow C_u$ from \eqref{eqn:neg-sphere-bundle}.  Since $f$ is $S^1$ invariant, then the level sets and the unstable sets are preserved by the $S^1$ action, and so there is a canonical homeomorphism
\begin{equation*}
S_u^- / S^1 \cong \mathbb{P}W_u^- .
\end{equation*}
Recall from \eqref{eqn:unbroken-flow-lines} that the space of unbroken flow lines from $C_\ell$ to $C_u$ is denoted by $\mathcal{L}_\ell^u$. Then there is an inclusion $\mathcal{L}_\ell^u \hookrightarrow S_u^-$ given by mapping a flow line to the unique point of intersection with the level set $f^{-1} \left( f(C_u) - \varepsilon \right)$. 

It follows directly from Definition \ref{def:secant-bundle} that there is an inclusion of the global secant variety
\begin{equation*}
\mathcal{P}_\ell^u \hookrightarrow \mathbb{P}W_u^- .
\end{equation*}

The next result relates the space of flow lines $\mathcal{L}_\ell^u$ to the global secant variety $\mathcal{P}_\ell^u$.

\begin{theorem}\label{thm:space-unbroken-flow-lines}
The projection $S_u^- \rightarrow S_u^- / S^1 \cong \mathbb{P}W_u^-$ maps the image of $\mathcal{L}_\ell^u \hookrightarrow S_u^-$ to the image of $\mathcal{P}_\ell^u \hookrightarrow \mathbb{P}W_u^-$. In particular, this projection induces a circle bundle $g : \mathcal{L}_\ell^u \rightarrow \mathcal{P}_\ell^u$, where the fibres are orbits of the $S^1$ action $e^{i \theta} \cdot [E, \phi] = [E, e^{i\theta} \phi]$.
\end{theorem}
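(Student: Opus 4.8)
The plan is to work in the neighbourhood of $C_u$ where, by Lemma \ref{lem:classify-unstable-set}, the unstable set $W_u^-$ is the vector bundle over $C_u$ whose fibre over $y=[L_1\oplus L_2,\phi]$ is $H^1(L_1^*L_2)$, and to transport the entire statement into a fibrewise statement about extension classes $e\in H^1(L_1^*L_2)$. First I would recall the gauge-theoretic fact underlying Section \ref{sec:secant-varieties}: writing a nearby point of $W_y^-$ as a $\bar\partial$-operator $\begin{pmatrix}\bar\partial_{L_2}&\beta\\0&\bar\partial_{L_1}\end{pmatrix}$ carrying the fixed critical Higgs field $\phi$, the complex gauge transformation $\mathrm{diag}(e^{u/2},e^{-u/2})$ conjugates $\phi$ back to itself while rescaling the off-diagonal term $\beta$; hence the $\mathbb{C}^*$ action $e^u\cdot[\bar\partial_A,\phi]=[\bar\partial_A,e^u\phi]$ is equivalent to keeping $\phi$ fixed and scaling the extension class $e=[\beta]\in H^1(L_1^*L_2)$ by a nonzero scalar. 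Consequently the flow subgroup $\mathbb{R}_{>0}\subset\mathbb{C}^*$ acts on the fibre by positive real scalings, the rotation subgroup $S^1$ acts by phases, and every $\mathbb{C}^*$-orbit projects to a single point $[e]\in\mathbb{P}H^1(L_1^*L_2)$. Under the identification $S_u^-/S^1\cong\mathbb{P}W_u^-$ this is exactly the fibrewise Hopf quotient $S^{2k-1}\to\mathbb{P}^{k-1}$ with $k=h^1(L_1^*L_2)$.

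Next I would identify the image. A point of $\mathcal{L}_\ell^u$ is an unbroken flow line, which meets the level set $f=f(C_u)-\varepsilon$ in the single point of $S_u^-$ giving the inclusion $\mathcal{L}_\ell^u\hookrightarrow S_u^-$; by the previous paragraph this point lies on the $\mathbb{R}_{>0}$-ray $\{re:r>0\}$ determined by some $e\in H^1(L_1^*L_2)\setminus\{0\}$ in the fibre over a critical point of $C_u$. The flow line descends to $C_\ell$, with $\ell=u-N$ and $N=u-\ell=\deg D$, precisely when the maximal destabilising subbundle of the extension $E$ has degree $\deg L_1-N$; Lemma \ref{lem:flow-lines-secant} translates this into the geometric condition that $[e]$ lie on the secant plane $\Pi_D^{L_1^*L_2}$ of a degree-$N$ effective divisor $D$ but on no secant plane of smaller total multiplicity, i.e. $[e]\in\Sec_{N,0}^{L_1^*L_2}(X)=\Sec_N^{L_1^*L_2}(X)\setminus\Sec_{N-1}^{L_1^*L_2}(X)$. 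Taking the union over all degree-$N$ divisors $D$ and over all critical points of $C_u$ shows that the projection $S_u^-\to\mathbb{P}W_u^-$ sends $\mathcal{L}_\ell^u$ onto the open global secant variety $\mathcal{P}_{\ell,0}^u\subseteq\mathcal{P}_\ell^u$; the complement $\mathcal{P}_\ell^u\setminus\mathcal{P}_{\ell,0}^u$ consists of classes lying on smaller secant planes, whose flow lines break at an intermediate critical set $C_m$ with $\ell<m<u$ and hence do not contribute to $\mathcal{L}_\ell^u$.

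Finally I would assemble the bundle structure. The composite $g:\mathcal{L}_\ell^u\hookrightarrow S_u^-\to\mathbb{P}W_u^-$ is the restriction of the principal $S^1$-bundle $S_u^-\to S_u^-/S^1\cong\mathbb{P}W_u^-$ to the $S^1$-invariant subset $\mathcal{S}_{\ell,0}^u$ lying over $\mathcal{P}_{\ell,0}^u$, and the restriction of a principal bundle to a subset of its base is again a principal bundle; since $\mathcal{L}_\ell^u\cong\mathcal{S}_{\ell,0}^u$ via the level-set crossing, $g$ is a circle bundle. Its fibre over $[e]$ is the set of $\mathbb{R}_{>0}$-rays inside the single $\mathbb{C}^*$-orbit of $e$, namely $(\mathbb{C}^*/\mathbb{R}_{>0})\cdot e\cong S^1$, which under the gauge equivalence of the first paragraph is exactly the orbit of the action $e^{i\theta}\cdot[E,\phi]=[E,e^{i\theta}\phi]$.

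I expect the main obstacle to be the second paragraph: one must verify that the Harder--Narasimhan / Hausel--Hitchin criterion for the downward limit matches the geometric secant condition uniformly across the fibres of $W_u^-\to C_u$, and that passing to the open stratum $\Sec_{N,0}^{L_1^*L_2}(X)$ correctly excludes the broken flow lines limiting to intermediate critical sets, so that $\mathcal{L}_\ell^u$ maps onto the smooth locus $\mathcal{P}_{\ell,0}^u$ (by Lemma \ref{lem:secant-dimension}) rather than all of $\mathcal{P}_\ell^u$. Most of this is the content of Lemma \ref{lem:flow-lines-secant}; the remaining care is the bookkeeping of the $\mathbb{R}_{>0}$- and $S^1$-directions of the $\mathbb{C}^*$ action, so that flow lines (quotients by $\mathbb{R}_{>0}$) become precisely the circle fibres.
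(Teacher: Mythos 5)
Your proposal is correct and follows essentially the same route as the paper, which establishes this theorem by assembling exactly the ingredients you use: Lemma \ref{lem:classify-unstable-set} (the unstable manifold as a bundle of extension classes), the $\mathbb{C}^*$-equivariance of the flow so that scaling $\phi$ is gauge-equivalent to scaling the extension class, the secant-plane/Harder--Narasimhan criterion of Section \ref{sec:flow-line-classification} culminating in Lemma \ref{lem:flow-lines-secant}, and the identification of the resulting circle fibres with $S^1$-orbits via restriction of the principal bundle $S_u^- \rightarrow \mathbb{P}W_u^-$. Your care about the image being the open stratum $\mathcal{P}_{\ell,0}^u$ agrees with the paper's intended reading: in Theorem \ref{thm:space-unbroken-flow-lines} and in \eqref{eqn:secant-compactification1} the symbol $\mathcal{P}_\ell^u$ denotes the open global secant variety with fibres $\Sec_{u-\ell,0}^{L_1^*L_2}(X)$, exactly as you use it.
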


\begin{proof}
Corollary \ref{cor:pointwise-flow-lines} determines the space of flow lines between two critical points. Extending this to the flow lines within the unstable bundle over the critical set $C_u$ gives the desired result.
\end{proof}

Now let $\overline{\mathcal{L}_\ell^u}$ denote the closure of $\mathcal{L}_\ell^u$ in $S_u^-$. This corresponds to adding flow lines for which the downwards limit lies in an intermediate critical set $C_m$ 
\begin{equation}\label{eqn:flow-compactification1}
\overline{\mathcal{L}_\ell^u} \setminus \mathcal{L}_\ell^u = \bigcup_{\ell < m < u} ( \mathcal{L}_m^u \cap \overline{\mathcal{L}_\ell^u} ) .
\end{equation}

Similarly, let $\overline{\mathcal{P}_\ell^u}$ denote the closure of $\mathcal{P}_\ell^u$ in $\mathbb{P}W_u^-$. This corresponds to adding points lying on secant planes of lower dimension
\begin{equation}\label{eqn:secant-compactification1}
\overline{\mathcal{P}_\ell^u} \setminus \mathcal{P}_\ell^u = \bigcup_{\ell < m < u} \mathcal{P}_m^u .
\end{equation}

Applying Theorem \ref{thm:space-unbroken-flow-lines} to the spaces $\mathcal{L}_m^u$ for $\ell < m < u$ gives us the following result, which shows that the projection map $\mathcal{L}_\ell^u \rightarrow \mathcal{P}_\ell^u$ from Theorem \ref{thm:space-unbroken-flow-lines} extends to a projection on the closure.

\begin{proposition}\label{prop:flow-secant-closure}
Restricting the projection $S_u^- \rightarrow S_u^- / S^1 \cong \mathbb{P}W_u^-$ to $\overline{\mathcal{L}_\ell^u} \subset S_u^-$ determines a circle bundle $\overline{\mathcal{L}_\ell^u} \rightarrow \overline{\mathcal{P}_\ell^u}$.
\end{proposition}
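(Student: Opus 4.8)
The essential observation is that the projection in question is the restriction of a circle bundle that is already defined on the whole sphere bundle. Write $q \colon S_u^- \to S_u^-/S^1 \cong \mathbb{P}W_u^-$ for this projection. On $W_{u,0}^- = W_u^- \setminus C_u$ the $S^1$ action is scalar multiplication on nonzero extension classes in the fibres of $W_u^- \to C_u$, hence free, so $q$ is a principal circle bundle; moreover $S_u^-$ is a sphere bundle over the compact critical set $C_u$ and is therefore compact, so $q$ is a closed map. The plan is to show that $\overline{\mathcal{L}_\ell^u}$ is saturated for $q$ (a union of $S^1$-orbits) with image exactly $\overline{\mathcal{P}_\ell^u}$; the proposition then follows because the restriction of a fibre bundle to the preimage of a subspace of the base is again a fibre bundle.

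First I would check that $\mathcal{L}_\ell^u$, viewed as a subset of $S_u^-$ via its inclusion, is $S^1$-invariant. Since the gradient flow is generated by $\mathbb{R}_{>0} \subset \mathbb{C}^*$ and $\mathbb{C}^*$ is abelian, the subgroup $S^1 \subset \mathbb{C}^*$ commutes with the flow; consequently $S^1$ carries flow lines to flow lines and preserves both the downward and upward limits, hence preserves each $\mathcal{F}_m^u$ as well as each level set of $f$. Thus $\mathcal{L}_\ell^u \subset S_u^-$ is a union of $S^1$-orbits, and because $S^1$ acts by homeomorphisms its closure $\overline{\mathcal{L}_\ell^u}$ is also $S^1$-invariant, i.e. $\overline{\mathcal{L}_\ell^u} = q^{-1}\bigl(q(\overline{\mathcal{L}_\ell^u})\bigr)$.

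It then remains to identify the image $q(\overline{\mathcal{L}_\ell^u})$ with the closure $\overline{\mathcal{P}_\ell^u}$. By Theorem \ref{thm:space-unbroken-flow-lines} the map $q$ sends $\mathcal{L}_\ell^u$ onto $\mathcal{P}_\ell^u$, so continuity gives $q(\overline{\mathcal{L}_\ell^u}) \subseteq \overline{q(\mathcal{L}_\ell^u)} = \overline{\mathcal{P}_\ell^u}$; conversely, since $q$ is closed, $q(\overline{\mathcal{L}_\ell^u})$ is a closed set containing $\mathcal{P}_\ell^u$ and therefore contains $\overline{\mathcal{P}_\ell^u}$. Hence $q(\overline{\mathcal{L}_\ell^u}) = \overline{\mathcal{P}_\ell^u}$, and combined with saturation this yields $\overline{\mathcal{L}_\ell^u} = q^{-1}(\overline{\mathcal{P}_\ell^u})$. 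Restricting the trivialisations of the principal circle bundle $q$ over this preimage gives the desired circle bundle $\overline{\mathcal{L}_\ell^u} \to \overline{\mathcal{P}_\ell^u}$. As a byproduct, applying Theorem \ref{thm:space-unbroken-flow-lines} to each intermediate pair $(C_m, C_u)$ identifies $q^{-1}(\mathcal{P}_m^u)$ with $\mathcal{L}_m^u$, so that the secant stratification \eqref{eqn:secant-compactification1} transports through $q$ to the flow-line stratification \eqref{eqn:flow-compactification1}.

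The only real subtlety is the identification of the base in the previous step: one must rule out the image of the closure being strictly larger than $\overline{\mathcal{P}_\ell^u}$, and this is exactly where the closedness of $q$—guaranteed by the compactness of the sphere bundle $S_u^-$ over $C_u$—is used. One should also confirm that the $S^1$ action remains free on the added boundary strata $\mathcal{L}_m^u$ with $\ell < m < u$; this holds because these strata still lie in $W_{u,0}^-$, away from the critical set $C_u$, so no isotropy can appear and the fibres of $q|_{\overline{\mathcal{L}_\ell^u}}$ are genuine circles throughout.
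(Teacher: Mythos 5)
Your proof is correct, but it takes a different route from the one the paper intends. The paper states this proposition as an immediate consequence of the stratification identities \eqref{eqn:flow-compactification1} and \eqref{eqn:secant-compactification1}: the closure $\overline{\mathcal{L}_\ell^u}$ adds exactly the strata $\mathcal{L}_m^u$ for $\ell < m < u$, the closure $\overline{\mathcal{P}_\ell^u}$ adds exactly the strata $\mathcal{P}_m^u$, and Theorem \ref{thm:space-unbroken-flow-lines} applied to each intermediate pair $(C_m, C_u)$ gives a circle bundle $\mathcal{L}_m^u \rightarrow \mathcal{P}_m^u$ stratum by stratum, which are then assembled over the union. You instead run a purely point-set argument: the orbit map $q : S_u^- \rightarrow \mathbb{P}W_u^-$ is a principal circle bundle (freeness of the scaling action away from $C_u$) and is closed (compactness of $S_u^-$, or simply compactness of $S^1$), the set $\mathcal{L}_\ell^u$ is $S^1$-saturated because $S^1$ commutes with the flow, and therefore $\overline{\mathcal{L}_\ell^u} = q^{-1}\bigl(\overline{\mathcal{P}_\ell^u}\bigr)$, so the bundle structure is inherited by restriction. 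This buys two things the stratum-wise reading leaves implicit: you do not need to assume the closure descriptions \eqref{eqn:flow-compactification1} and \eqref{eqn:secant-compactification1} as input (your argument lets each one be deduced from the other via $q$), and local triviality of the restricted bundle is automatic across stratum boundaries, whereas a union of circle bundles over individual strata is not by itself obviously locally trivial at points of the lower strata. The one hypothesis you should flag explicitly is that freeness of the $S^1$ action on all of $W_{u,0}^-$ rests on the identification of that action with fibrewise scalar multiplication on the extension classes, which is the same fact the paper invokes in Section \ref{sec:secant-varieties} and underlies Theorem \ref{thm:space-unbroken-flow-lines} itself, so your use of it is consistent with what you are allowed to assume.
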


\section{The energy function is Morse-Bott-Smale}\label{sec:Morse-Bott-Smale}

Hitchin \cite{Hitchin87} uses a general result of Frankel \cite{Frankel59} to show that if the degree and rank of $E$ are coprime, then $f : \mathcal{M}_{Higgs}^{st} \rightarrow \mathbb{R}$ is a perfect Morse-Bott function (see also \cite{feehan22} and \cite{feehanleness23} for related results for Bialynicki-Birula stratifications of singular moduli spaces). When $\rank(E) = 2$, then this also applies in the noncoprime case, since the singularities in the moduli space do not intersect the stable or unstable manifolds for the nonminimal critical sets (see Section \ref{subsec:rank-2-remarks}). Now we can use the explicit description of the spaces of flow lines to show that the stable and unstable manifolds of $f$ intersect transversely, and therefore $f$ satisfies the stronger Morse-Bott-Smale condition.

Let $\rank(E) = 2$, let $\frac{1}{2} \deg E < \ell < u \leq \frac{1}{2} \left( \deg E + \deg M \right)$ and consider two critical sets $C_{\ell}$, $C_{u}$. Since $f$ is Morse-Bott, then $W_{\ell}^+$ is a manifold with codimension equal to the Morse index at $C_{\ell}$. Therefore, for any $x \in \mathcal{F}_{\ell}^{u}$ we have
\begin{equation*}
T_x \mathcal{M}_{Higgs}^{ss} \cong T_x W_{\ell}^+ \oplus N_x^\mathcal{M} W_{\ell}^+ ,
\end{equation*}
where $N_x^\mathcal{M} W_{\ell}^+$ denotes the normal bundle to $T_x W_{\ell}^+$ in the ambient manifold $\mathcal{M} := \mathcal{M}_{Higgs}^{ss}$. Theorem \ref{thm:space-unbroken-flow-lines} shows that the space of unbroken flow lines between the two critical sets is $\mathcal{S}_\ell^{u} \subset S_u^-$. In particular, we have the following

\begin{lemma}\label{lem:index-equals-codimension}
If $\ell > 0$, then the real codimension of $\mathcal{F}_{\ell}^{u}$ in $W_{u}^-$ is equal to the Morse index of $C_{\ell}$.
\end{lemma}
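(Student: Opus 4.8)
The plan is to compute both quantities in Lemma~\ref{lem:index-equals-codimension} in terms of the degrees of the relevant line bundles and check that they agree. First I would unwind the definitions. The Morse index of the critical set $C_\ell$ is given by the formula \eqref{eqn:Morse-index} from Lemma~\ref{lem:classify-unstable-set}: if $C_\ell$ consists of Higgs pairs $[L_1' \oplus L_2', \phi_\infty]$ with $\deg L_1' = \ell$, then the index is
\begin{equation*}
\lambda_\ell = 2g-2 + 2(\deg L_1' - \deg L_2') = 2g-2+2(2\ell - \deg E),
\end{equation*}
using $\deg L_1' + \deg L_2' = \deg E$. This is a purely numerical expression depending only on $\ell$ and $\deg E$.

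Next I would compute $\dim_\mathbb{R} \mathcal{F}_\ell^u$ and $\dim_\mathbb{R} W_u^-$ and subtract. By Theorem~\ref{thm:space-unbroken-flow-lines}, the space of unbroken flow lines $\mathcal{L}_\ell^u$ is a circle bundle over the global secant variety $\mathcal{P}_\ell^u$, whose fibres over $C_u$ are the smooth secant varieties $\Sec_{N,0}^{L_1^*L_2}(X)$ with $N = u-\ell$. By Lemma~\ref{lem:secant-dimension} these fibres have complex dimension $2N-1 = 2(u-\ell)-1$, so $\dim_\mathbb{R} \Sec_{N,0} = 2(2(u-\ell)-1)$. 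Adding the real dimension of the base $C_u$, then the circle fibre of $g$, and the $\mathbb{R}$ of the flow direction (since $\mathcal{F}_\ell^u = \mathcal{L}_\ell^u \times \mathbb{R}$ up to the time translation), gives $\dim_\mathbb{R}\mathcal{F}_\ell^u$. Meanwhile $W_u^-$ is a vector bundle over $C_u$ of rank $\lambda_u = 2h^1(L_1^*L_2)$, so $\dim_\mathbb{R} W_u^- = \dim_\mathbb{R} C_u + \lambda_u$. The codimension is the difference, and the $\dim_\mathbb{R} C_u$ terms cancel, leaving a computation purely in terms of $\lambda_u$ and the secant dimension.

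The key step is then to verify the identity
\begin{equation*}
\lambda_u - \dim_\mathbb{R}\mathcal{F}_\ell^u + \dim_\mathbb{R} C_u = \lambda_\ell,
\end{equation*}
i.e.\ that the fibrewise codimension of the secant variety inside $\mathbb{P}W_u^-$, plus the appropriate circle and $\mathbb{R}$ corrections, reproduces $\lambda_\ell$ exactly. Concretely, $\lambda_u = 2g-2+2(2u-\deg E)$ and $\lambda_\ell = 2g-2+2(2\ell-\deg E)$, so $\lambda_u - \lambda_\ell = 4(u-\ell) = 4N$, and I would check that the real codimension of the secant-variety fibre in $\mathbb{P}H^1(L_1^*L_2)$ (of complex dimension $h^1-1$) together with the dropped flow and circle directions accounts for exactly this difference of $4N$. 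I expect the main obstacle to be nothing conceptual but purely bookkeeping: carefully tracking how the $S^1$ circle fibre of $g$ and the $\mathbb{R}$ time-translation direction enter the dimension count, and making sure the passage between the sphere bundle $S_u^-$, its projectivisation $\mathbb{P}W_u^-$, and the full unstable manifold $W_u^-$ shifts the count by the correct number of dimensions. Once these constants are pinned down, the identity $\dim_\mathbb{R} W_u^- - \dim_\mathbb{R}\mathcal{F}_\ell^u = \lambda_\ell$ follows by direct substitution.
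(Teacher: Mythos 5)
Your proposal is correct and follows essentially the same route as the paper: both reduce the statement to comparing the fibrewise dimension $2(2(u-\ell)-1)$ of the secant variety from Lemma~\ref{lem:secant-dimension} with $h^1(L_1^*L_2)$ via Riemann--Roch, using Theorem~\ref{thm:space-unbroken-flow-lines} and the index formula \eqref{eqn:Morse-index}, and indeed the bookkeeping works out ($\lambda_u - \dim_\mathbb{R}\mathcal{F}_\ell^u + \dim_\mathbb{R} C_u = \lambda_u - 4(u-\ell) = \lambda_\ell$). The only difference is cosmetic: the paper computes the codimension of $\mathcal{P}_\ell^u$ inside $\mathbb{P}W_u^-$ fibrewise, while you track total dimensions including the base $C_u$, the circle fibre, and the $\mathbb{R}$ flow direction, which amounts to the same calculation.
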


\begin{proof}
The codimension of the global secant variety $\mathcal{P}_\ell^{u} \subset \mathbb{P} W_{u}^-$ is equal to the codimension of $\Sec_{u-\ell, 0}^{L_1^*L_2}(X)$ in a single fibre $\mathbb{P}H^1(L_1^*L_2)$.  Lemma \ref{lem:secant-dimension} shows that $\dim_\mathbb{R} \Sec_0^{u-\ell}(X) = 2 \left( 2(u-\ell) - 1 \right)$ and so an application of Riemann-Roch shows that the real codimension in $\mathbb{P}H^1(L_1^*L_2)$ is
\begin{align*}
\dim_\mathbb{R} H^1(L_1^*L_2) - 2 - 2(2(u-\ell) - 1) & = 2 \left( g-1 + \deg L_1 - \deg L_2 - 2(u-\ell) \right) \\
 & = 2 \left( g-1 + u - (\deg E - u) - 2(u-\ell) \right) \\ 
 & = 2 \left( g-1  - \deg E  + 2 \ell \right) ,
\end{align*}
which is equal to the Morse index $2(g-1 + \ell - (\deg E - \ell))$ at $C_\ell$ from \eqref{eqn:Morse-index}.
\end{proof}

\begin{proposition}\label{prop:morse-bott-smale}
When $\rank(E) = 2$, then the function $f : \mathcal{M}_{Higgs}^{ss}(E) \rightarrow \mathbb{R}$ is Morse-Bott-Smale.
\end{proposition}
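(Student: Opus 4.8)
The plan is to prove the Morse-Bott-Smale condition directly, i.e. to show that for every pair of critical sets the stable and unstable manifolds meet transversally. For pairs involving the minimum $C_0$ this is automatic, since the stable manifold of the minimum is open (codimension zero) and the relevant sets are manifolds by Section \ref{subsec:rank-2-remarks}, so the substance is the case of two nonminimal sets $C_\ell, C_u$ with $\frac{1}{2}\deg E < \ell < u \leq \frac{1}{2}(\deg E + \deg M)$, which is exactly the range of Lemma \ref{lem:index-equals-codimension}. Fix $x \in \mathcal{F}_\ell^u = W_\ell^+ \cap W_u^-$. Since $f$ is Morse-Bott, $W_\ell^+$ is a submanifold with $T_x\mathcal{M} = T_x W_\ell^+ \oplus N_x^\mathcal{M} W_\ell^+$ and $\dim_\mathbb{R} N_x^\mathcal{M} W_\ell^+ = \lambda_\ell$, so transversality at $x$ is equivalent to surjectivity of the projection $\rho : T_x W_u^- \to N_x^\mathcal{M} W_\ell^+$.

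First I would reduce this surjectivity to a tangent-space statement. Because $\mathcal{F}_\ell^u$ lies in both $W_\ell^+$ and $W_u^-$, we always have $T_x\mathcal{F}_\ell^u \subseteq \ker\rho$, and Lemma \ref{lem:index-equals-codimension} gives $\dim_\mathbb{R}\mathcal{F}_\ell^u = \dim_\mathbb{R} W_u^- - \lambda_\ell$. Comparing dimensions, $\rho$ is surjective if and only if $\dim\ker\rho = \dim\mathcal{F}_\ell^u$, i.e. if and only if $T_xW_u^- \cap T_x W_\ell^+ = T_x\mathcal{F}_\ell^u$. Thus the codimension count in Lemma \ref{lem:index-equals-codimension} reduces the whole proposition to showing that $W_\ell^+$ meets $W_u^-$ \emph{cleanly} at $x$. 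It is worth stressing that the set-level equality $W_\ell^+\cap W_u^- = \mathcal{F}_\ell^u$ together with smoothness of $\mathcal{F}_\ell^u$ (from Lemma \ref{lem:secant-dimension} and Theorem \ref{thm:space-unbroken-flow-lines}) is not by itself sufficient, since two smooth submanifolds can intersect in a smooth submanifold of the expected dimension while remaining tangent; the tangent-space equality genuinely has to be verified.

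To establish cleanness I would use the explicit flow from Section \ref{subsec:explicit-gauge}. Near $C_u$ the unstable manifold is the vector bundle $W_u^-\to C_u$ with fibre $H^1(L_1^*L_2)$, and by Lemma \ref{lem:flow-lines-secant} the locus flowing down to $C_\ell$ is precisely where the extension class $[e]$ lies on $\Sec_{u-\ell,0}^{L_1^*L_2}(X)$, a smooth complex submanifold by Lemma \ref{lem:secant-dimension} whose tangent space at $x$ is therefore controlled. A vector $v \in T_xW_u^-\cap T_xW_\ell^+$ is an infinitesimal deformation of $x$ inside $W_u^-$ that keeps $x$ flowing down to $C_\ell$ to first order; it splits into a deformation of the base point in $C_u$ and a deformation of $e\in H^1(L_1^*L_2)$, and I would use Lemma \ref{lem:downwards-flow-criterion} to compute the first-order variation of the downward limit along $v$. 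What must be checked is that a deformation of $e$ normal to $\Sec_{u-\ell,0}^{L_1^*L_2}(X)$ moves the class off this secant stratum to first order, hence changes the downward critical set and so cannot lie in $T_xW_\ell^+$; this forces $v \in T_x\mathcal{F}_\ell^u$ and yields $T_xW_u^-\cap T_xW_\ell^+ = T_x\mathcal{F}_\ell^u$. Since the flow is generated by $\mathbb{R}_{>0}\subset\mathbb{C}^*$, the sets $W_\ell^+, W_u^-, \mathcal{F}_\ell^u$ are Bialynicki-Birula strata of a holomorphic action and hence complex submanifolds away from the polystable locus (Section \ref{subsec:rank-2-remarks}), so it suffices to verify the equality on complex tangent spaces, which streamlines the computation.

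The main obstacle is exactly this infinitesimal transversality: although Lemma \ref{lem:flow-lines-secant} identifies the downward limit with the secant stratum containing $[e]$, the stratum is a discontinuous invariant, so one cannot argue set-theoretically and must instead show that its linearisation detects every normal direction to $\Sec_{u-\ell,0}^{L_1^*L_2}(X)$. I expect this to follow from the explicit gauge transformation $g_2 g_1$ and the residue description of extension classes in Section \ref{subsec:explicit-gauge}, which encode the dependence of the limiting Higgs pair on $e$ concretely enough to read off that $\rho$ restricted to the normal bundle of $\mathcal{F}_\ell^u$ in $W_u^-$ is injective, and therefore, by the dimension count of Lemma \ref{lem:index-equals-codimension}, an isomorphism onto $N_x^\mathcal{M} W_\ell^+$.
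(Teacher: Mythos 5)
Your proposal takes the same route as the paper: everything rests on the codimension count of Lemma \ref{lem:index-equals-codimension}. In fact the paper's entire proof of Proposition \ref{prop:morse-bott-smale} is precisely the linear-algebra step that you single out as insufficient: it takes a complement $N_x^{W_u^-}\mathcal{F}_\ell^u$ of $T_x\mathcal{F}_\ell^u$ inside $T_xW_u^-$, asserts that Lemma \ref{lem:index-equals-codimension} makes it complementary to $T_xW_\ell^+$, and concludes $T_x\mathcal{M}_{Higgs}^{ss} \cong T_xW_\ell^+ \oplus N_x^{W_u^-}\mathcal{F}_\ell^u \subset T_xW_\ell^+ + T_xW_u^-$. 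Your objection that the dimension count alone does not give this is correct as a matter of linear algebra: if $T_xW_u^-\cap T_xW_\ell^+$ were strictly larger than $T_x\mathcal{F}_\ell^u$, then \emph{every} complement $N$ of $T_x\mathcal{F}_\ell^u$ in $T_xW_u^-$ would meet $T_xW_\ell^+$ nontrivially (take $k$ in the intersection but not in $T_x\mathcal{F}_\ell^u$, write $k=f+n$ with $f\in T_x\mathcal{F}_\ell^u$, $n\in N$; then $n=k-f$ is a nonzero element of $N\cap T_xW_\ell^+$), so the complementarity the paper asserts is literally equivalent to the cleanness statement $T_xW_u^-\cap T_xW_\ell^+ = T_x\mathcal{F}_\ell^u$. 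Your preliminary reductions (disposing of the minimum $C_0$, rewriting transversality as surjectivity of $\rho$, and rewriting surjectivity as cleanness via Lemma \ref{lem:index-equals-codimension}) are all correct.

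The gap is that your proposal stops exactly where the remaining work begins: the cleanness verification is only announced (``I would use Lemma \ref{lem:downwards-flow-criterion} to compute the first-order variation\dots'', ``I expect this to follow from the explicit gauge transformation\dots'') and never carried out. This is not a routine step: it requires controlling how $W_\ell^+$ sits near $x$ as a submanifold of $\mathcal{M}_{Higgs}^{ss}$, i.e.\ linearising the downward-limit map, whereas Lemma \ref{lem:flow-lines-secant} is a set-theoretic identification of $\mathcal{F}_\ell^u$ inside $W_u^-$ and the computation in Section \ref{subsec:explicit-gauge} is performed along a single flow line at a fixed extension class, so neither hands you the derivative you need. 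In short: you have reproduced the paper's argument, and you have correctly identified the tangency issue that the paper's two-line proof passes over in silence, but you have not resolved that issue; as a standalone proof the proposal is incomplete at precisely the point you yourself flag as the main obstacle, while judged against the paper it contains everything the published proof does.
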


\begin{proof}
If $\ell = 0$, then $W_\ell^+$ is an open dense subset of $\mathcal{M}_{Higgs}^{ss}(E)$ and so the intersection $W_\ell^+ \cap W_u^-$ is automatically transverse for all $u > 0$.

If $\ell > 0$, then the previous lemma shows that there is a subspace $N_x^{W_{u}^-} \mathcal{F}_{\ell}^{u} \subset T_x W_{u}^-$ which is complementary to $T_x W_{\ell}^+$ and has the same dimension as $N_x^\mathcal{M} W_{\ell}^+$. Therefore we have
\begin{equation*}
T_x \mathcal{M}_{Higgs}^{ss} \cong T_x W_\ell^+ \oplus N_x^\mathcal{M} W_\ell^+ \cong T_x W_{\ell}^+ \oplus N_x^{W_{u}^-} \mathcal{F}_{\ell}^{u} \subset T_x W_{\ell}^+ + T_x W_{u}^- ,
\end{equation*}
and so the intersection $W_{\ell}^+ \cap W_{u}^-$ is transverse.
\end{proof}

\section{Compactification of spaces of flow lines}\label{sec:compactification}

An important step in the construction of the Morse-Bott-Smale complex (cf. \cite{AustinBraam95}) is to compactify the space of flow lines $\mathcal{L}_\ell^u$ between two critical sets by adding broken flow lines. For the moduli space of Higgs bundles, Theorem \ref{thm:space-unbroken-flow-lines} shows that $\mathcal{L}_\ell^u$ has an algebro-geometric interpretation in terms of the global secant variety $\mathcal{P}_\ell^u$ and the goal of this section is to prove Theorem \ref{thm:Morse-resolution}, which shows that the compactification by broken flow lines also has an algebro-geometric interpretation via the resolution of secant varieties studied by Bertram \cite{Bertram92}.

Recall the space $\mathcal{L}_\ell^u$ of unbroken flow lines from \eqref{eqn:unbroken-flow-lines} and the inclusion $\mathcal{L}_\ell^u \hookrightarrow S_u^-$ into the sphere bundle \eqref{eqn:neg-sphere-bundle}. There are two compactifications of $\mathcal{L}_\ell^u$ that will be important in the sequel. The first, denoted by $\overline{\mathcal{L}_\ell^u}$, is simply given by taking the closure in $S_u^-$, and corresponds to taking the union of $\mathcal{L}_\ell^u$ with all spaces $\mathcal{L}_m^u \cap \overline{\mathcal{L}_\ell^u}$ such that $\ell < m < u$ (cf. \eqref{eqn:flow-compactification1}).

The second compactification corresponds to adding broken flow lines between $C_u$ and $C_\ell$. Austin and Braam \cite{AustinBraam95} give a detailed description of this compactification, which will be denoted $\widetilde{\mathcal{L}_\ell^u}$ in the sequel (see Section \ref{subsec:Morse-resolution} for more details). There is a canonical projection
\begin{equation}\label{eqn:Morse-resolution}
P_{Morse} : \widetilde{\mathcal{L}_\ell^u} \rightarrow \overline{\mathcal{L}_\ell^u}
\end{equation}
given by mapping a broken flow line emanating from $C_u$ to the unique point of intersection with the level set $f^{-1} \left( f(C_u) - \varepsilon \right)$, where $\varepsilon > 0$ is chosen so that there are no critical values between $f(C_u) - \varepsilon$ and $f(C_u)$. This projection is one-to-one on the open subset $\mathcal{L}_\ell^u \subset \widetilde{\mathcal{L}_\ell^u}$ and for each $x \in \widetilde{\mathcal{L}_\ell^u} \setminus \mathcal{L}_\ell^u$ such that $\lim_{t \rightarrow \infty} \varphi(x,t) = x_\infty \in C_m$ for $\ell < m < u$, the fibre $P_{Morse}^{-1}(x)$ is the space of broken flow lines between $x_\infty$ and $C_\ell$.

On the algebro-geometric side of the picture, Bertram \cite{Bertram92} constructs a resolution $\widetilde{\Sec}_L^N(X) \rightarrow \Sec_L^N(X)$ of each secant variety, which extends to a fibrewise resolution 
\begin{equation}\label{eqn:secant-resolution}
P_{Sec} : \widetilde{\mathcal{P}_\ell^u} \rightarrow \overline{\mathcal{P}_\ell^u}
\end{equation}
of the global secant variety from Definition \ref{def:secant-bundle}. In this resolution, the points in the exceptional divisors correspond to sequences of points in secant varieties of $X \subset \mathbb{P}H^1(L_1^*L_2[2D])$ for different divisors $D$ (see \cite[Cor. 2.5(b)]{Bertram92} and Lemma \ref{lem:secant-fibres} below for more details). From the point of view of Theorem \ref{thm:space-unbroken-flow-lines}, this corresponds to a sequence $y_0, \ldots, y_n$ of critical points together with points $z_1 \in \mathbb{P} W_{y_0}^-, \ldots, z_n \in \mathbb{P} W_{y_n}^-$ such that each $z_k \in \mathbb{P} W_{y_{k-1}}^-$ lies in a secant plane of $X \hookrightarrow \mathbb{P} W_{y_{k-1}}^-$ such that the preimage in the sphere bundle $S_{y_{k-1}}^-$ flows down to $y_k$.

Theorem \ref{thm:space-unbroken-flow-lines} shows that the $S^1$ action on the moduli space determines a circle bundle $\mathcal{L}_\ell^u \rightarrow \mathcal{P}_\ell^u$, and Proposition \ref{prop:flow-secant-closure} shows that this extends to a circle bundle $\overline{\mathcal{L}_\ell^u} \rightarrow \overline{\mathcal{P}_\ell^u}$. In Section \ref{subsec:Morse-resolution}, we construct a map $\widetilde{\mathcal{L}_\ell^u} \rightarrow \widetilde{\mathcal{P}_\ell^u}$ that takes a broken flow line to a sequence of secant planes, corresponding to a point in the resolution \eqref{eqn:secant-resolution}. The goal of this section is to prove the following result that this map extends to a projection from the Morse resolution \eqref{eqn:Morse-resolution} to Bertram's resolution \eqref{eqn:secant-resolution}.

\begin{theorem}\label{thm:Morse-resolution}
The following diagram commutes
\begin{equation}\label{eqn:relate-resolutions}
\begin{tikzcd}[column sep=1.5cm]
\widetilde{\mathcal{L}_\ell^u} \arrow{r}{\text{Def. \ref{def:resolution-map}}} \arrow{d}[swap]{P_{Morse}} & \widetilde{\mathcal{P}_\ell^u} \arrow{d}{P_{sec}} \\
\overline{\mathcal{L}_\ell^u} \arrow{r}{\text{Prop. \ref{prop:flow-secant-closure}}} & \overline{\mathcal{P}_\ell^u} 
\end{tikzcd}
\end{equation}
\end{theorem}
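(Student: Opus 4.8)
The plan is to prove commutativity by a density argument, reducing the global statement to the open locus of unbroken flow lines, where it is essentially a restatement of Theorem \ref{thm:space-unbroken-flow-lines}. Writing $f_1 = p \circ P_{Morse}$ and $f_2 = P_{Sec} \circ r$ for the two composite maps $\widetilde{\mathcal{L}_\ell^u} \to \overline{\mathcal{P}_\ell^u}$---where $p$ is the circle bundle projection of Proposition \ref{prop:flow-secant-closure} and $r$ is the resolution map of Definition \ref{def:resolution-map}---I would first observe that $\overline{\mathcal{P}_\ell^u} \subset \mathbb{P}W_u^-$ is Hausdorff and that $\mathcal{L}_\ell^u$ is dense in $\widetilde{\mathcal{L}_\ell^u}$. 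Hence, once $f_1$ and $f_2$ are known to be continuous, it suffices to check $f_1 = f_2$ on the dense subset $\mathcal{L}_\ell^u$, and continuity then forces them to agree everywhere (the locus where two continuous maps into a Hausdorff space agree is closed).

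Over $\mathcal{L}_\ell^u$ the verification is immediate. There $P_{Morse}$ is one-to-one onto the open subset $\mathcal{L}_\ell^u \subset \overline{\mathcal{L}_\ell^u}$ (as noted after \eqref{eqn:Morse-resolution}), Bertram's resolution $P_{Sec}$ restricts to an isomorphism over the smooth locus $\mathcal{P}_{\ell,0}^u$ away from the exceptional divisors, and $r$ sends an unbroken flow line to the single associated secant datum, so that it is the canonical lift of $p$, that is $r = P_{Sec}^{-1} \circ (p \circ P_{Morse})$ there. Commutativity on $\mathcal{L}_\ell^u$ is then exactly Theorem \ref{thm:space-unbroken-flow-lines}: $p \circ P_{Morse}$ sends a flow line to its class in $\mathbb{P}W_u^-$, which is precisely the point of $\mathcal{P}_{\ell,0}^u$ recorded by $r$ and returned unchanged by $P_{Sec}$.

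The main obstacle will be establishing the continuity of $f_2$ across the boundary $\widetilde{\mathcal{L}_\ell^u} \setminus \mathcal{L}_\ell^u$, i.e. checking that the chain of secant planes assigned by $r$ to a broken flow line is the limit of the single secant data of nearby unbroken flow lines in the sense dictated by the exceptional divisors of $P_{Sec}$. I would analyse this one boundary stratum at a time: for a broken flow line whose first segment runs from $C_u$ down to an intermediate $C_m$, Lemma \ref{lem:downwards-flow-criterion} identifies that segment with the secant plane $\Pi_D^{L_1^*L_2}$ of an effective divisor $D$ of degree $u-m$, so that $p \circ P_{Morse}$ records exactly the point of this (lower-dimensional) secant plane inside $\overline{\mathcal{P}_\ell^u}$. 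On the other side, $P_{Sec}$ collapses the chain produced by $r$ onto its top component, which lives in $\mathbb{P}W_u^-$ and is this same point of $\Pi_D^{L_1^*L_2} \subset \Sec_{u-m}^{L_1^*L_2}(X)$. The crux is thus to match the two degenerations: the stratification of $\overline{\mathcal{L}_\ell^u}$ by intermediate critical sets $C_m$, $\ell < m < u$ (see \eqref{eqn:flow-compactification1}), must correspond to the nested stratification of $\overline{\mathcal{P}_\ell^u}$ by the secant subvarieties $\Sec_{u-m}^{L_1^*L_2}(X)$, whose top stratum is $\Sing\left(\Sec_{u-\ell}^{L_1^*L_2}(X)\right) = \Sec_{u-\ell-1}^{L_1^*L_2}(X)$ by Lemma \ref{lem:secant-dimension}. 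Once this matching is verified, both composites are continuous with identical boundary values, and the density argument completes the proof.
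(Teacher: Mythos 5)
Your proposal is built on scaffolding that the statement does not need and that, as written, leaves a genuine gap. Commutativity of \eqref{eqn:relate-resolutions} is a pointwise assertion about maps that are \emph{explicitly defined on every broken flow line}: by Definition \ref{def:Morse-resolution}, $P_{Morse}$ sends $x = \{x_1, \ldots, x_n\}$ to its first segment $x_1$; by Definition \ref{def:resolution-map}, the top map is $G(\{x_1,\ldots,x_n\}) = \{g(x_1),\ldots,g(x_n)\}$; and by Bertram's construction (Lemma \ref{lem:secant-fibres} together with the identification \eqref{eqn:broken-secant-plane}), $P_{Sec}$ sends such a chain to its first entry. Hence both composites send $\{x_1,\ldots,x_n\}$ to $g(x_1)$, and the theorem is proved. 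This two-line computation is the paper's entire proof: no density, no continuity, and no Hausdorff property of the target are required.

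By contrast, your argument makes the conclusion contingent on the continuity of $f_1$ and $f_2$ on all of $\widetilde{\mathcal{L}_\ell^u}$, which you correctly flag as ``the main obstacle'' but never establish. The stratum-by-stratum matching you sketch shows that the two composites take \emph{identical values on each boundary stratum}; that is pointwise agreement, not continuity, and the inference ``once this matching is verified, both composites are continuous'' is a non sequitur. Actually proving continuity would require analysing how sequences of unbroken flow lines converge to broken ones in the Austin--Braam topology on $\widetilde{\mathcal{L}_\ell^u}$ and how the corresponding secant data converge in Bertram's blowup --- a substantially harder question than the theorem itself poses. The remedy is to notice that your own third paragraph (first segment of a broken flow line corresponding to a point of $\Pi_D^{L_1^*L_2}$, and $P_{Sec}$ collapsing a chain to its top component) already \emph{is} the pointwise verification on the boundary: pointwise agreement on the open stratum (your second paragraph) plus pointwise agreement on every boundary stratum gives commutativity everywhere, with no limiting argument at all. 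Deleting the density/continuity framework and promoting that boundary computation to be the proof itself yields a correct argument --- and it is exactly the paper's.
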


One of the  motivations of \cite{Bertram92} was to resolve the rational extension map to the moduli space of semistable bundles $\mathbb{P} := \mathbb{P}H^1(L_1^*L_2) \dashrightarrow \mathcal{M}^{ss}(E)$ to a morphism $\widetilde{\mathbb{P}} \rightarrow \mathcal{M}^{ss}(E)$. Theorem \ref{thm:Morse-resolution} gives a Morse-theoretic interpretation of this morphism by showing that the Bertram's resolution of the extension map corresponds to the map $\widetilde{\mathcal{L}_0^u} \rightarrow \mathcal{M}^{ss}(E)$ taking a broken flow line to the critical point in the lower limit.

\subsection{The Morse resolution}\label{subsec:Morse-resolution}

In the following, let $f : M \rightarrow \mathbb{R}$ be a proper Morse-Bott function, with critical sets labelled $C_d$ for $0 \leq d \leq n$ and $f(C_i) < f(C_j)$ iff $i < j$. We also assume that $f$ is \emph{weakly self indexing}, so that $\mathcal{L}_j^i = \emptyset$ if $i < j$ (cf. \cite[Sec. 3]{AustinBraam95}). This assumption is satisfied for the moduli space of rank $2$ Higgs bundles with the critical sets labelled with the convention of Section \ref{subsec:properties-energy-function}. The time $t$ downwards gradient flow of $f$ with initial condition $z \in M$ is denoted by $\varphi(z,t)$.

Given a Morse-Bott-Smale function satisfying the conditions of \cite{AustinBraam95}, any pair of critical sets determines a \emph{Morse resolution} defined using the compactification of unbroken flow lines by adding spaces of broken flow lines. More precisely, let $C_\ell$ and $C_u$ be two critical sets with $f(C_\ell) < f(C_u)$, and recall the definition of the space of unbroken flow lines
\begin{equation*}
\mathcal{L}_\ell^u = \{ z \in M \, \mid \, \lim_{t \rightarrow \infty} \varphi(z, t) \in C_\ell, \lim_{t \rightarrow - \infty} \varphi(z,t) \in C_u \} / \mathbb{R} ,
\end{equation*}
where the action of $\mathbb{R}$ on a flow line is by time translation. When it is necessary to specify the upper critical point $y \in C_u$, the space of flow lines is denoted 
\begin{equation*}
\mathcal{L}_\ell^y = \{ z \in M \, \mid \, \lim_{t \rightarrow \infty} \varphi(z, t) \in C_\ell, \lim_{t \rightarrow - \infty} \varphi(z,t) = y \} / \mathbb{R} .
\end{equation*}
Choose $\varepsilon > 0$ so that there are no critical values in the interval $\left[ f(C_u) - \varepsilon, f(C_u) \right)$. Then each flow line emanating from $C_u$ has a unique point of intersection with the level set $f^{-1} \left( f(C_u) - \varepsilon \right)$. Identifying the sphere bundle $S_u^-$ inside the unstable manifold with a subset of the level set gives a homeomorphism $S_u^- \cong W_{u,0}^- \cap f^{-1} \left( f(C_u) - \varepsilon \right)$, and therefore there is an inclusion 
\begin{equation*}
\mathcal{L}_\ell^u \hookrightarrow S_u^- .
\end{equation*}
Now define $\overline{\mathcal{L}_\ell^u}$ to be the closure of $\mathcal{L}_\ell^u$ inside $S_u^-$. Similarly, for a given $y \in C_u$, $\overline{\mathcal{L}_\ell^y}$ denotes the closure of $\mathcal{L}_\ell^y \subset S_y^-$.

\subsubsection{Compactification by broken flow lines}

Before defining the Morse resolution (see Definition \ref{def:Morse-resolution} below), first recall the compactification $\widetilde{\mathcal{L}_\ell^u}$ of the space $\mathcal{L}_\ell^u$ of flow lines defined by adding broken flow lines. The motivation for this definition is that these compactified spaces are used to show that the differentials $\delta$ in the Morse complex satisfy the conditions $\delta \circ \delta = 0$ (cf. \cite[Prop. 3.5]{AustinBraam95}) and that they are compatible with the cup product via the chain relation \cite[(2.2)]{AustinBraam95}. This compactification is explained in detail by Austin and Braam \cite[Sec. 2]{AustinBraam95}.

Each point $x \in \widetilde{\mathcal{L}_\ell^u}$ determines a sequence of intermediate critical sets $C_{m_1}, C_{m_2}, \ldots, C_{m_x} = C_\ell$ with indices $u > m_1 > m_2 > \cdots > m_{n_x} = \ell$ and flow lines between these critical sets
\begin{equation}\label{eqn:broken-flow-line}
x_1 \in \mathcal{L}_{m_1}^u, x_2 \in \mathcal{L}_{m_2}^{m_1}, \ldots, x_{n_x} \in \mathcal{L}_\ell^{m_{n_x-1}} .
\end{equation}
In the sequel, a flow line of this form will be denoted $x = \{ x_1, \ldots, x_{n_x} \} \in \widetilde{\mathcal{L}_\ell^u}$.

\begin{definition}\label{def:Morse-resolution}
The \emph{Morse resolution $\widetilde{\mathcal{L}_\ell^u} \rightarrow \overline{\mathcal{L}_\ell^u}$} is the projection taking a broken flow line $x = \{ x_1, \ldots, x_{n_x} \} \in \widetilde{\mathcal{L}_\ell^u}$ to the first flow line emanating from $C_u$
\begin{align}
\begin{split}
P_{Morse} : \widetilde{\mathcal{L}_\ell^u} & \rightarrow \overline{\mathcal{L}_\ell^u} \\
x = \{ x_1, \ldots, x_{n_x} \} & \mapsto x_1 .
\end{split}
\end{align}
When the upper critical point $y \in C_u$ is fixed, then the restriction of the Morse resolution is denoted
\begin{equation*}
P_{Morse} : \widetilde{\mathcal{L}_\ell^y} \rightarrow \overline{\mathcal{L}_\ell^y}
\end{equation*}
\end{definition}

The following lemma shows that each fibre of $P_{Morse}$ is itself a Morse resolution at a lower critical set.

\begin{lemma}\label{lem:Morse-fibres}
Let $\ell < m < u$, let $P_{Morse} : \widetilde{\mathcal{L}_\ell^u} \rightarrow \overline{\mathcal{L}}_\ell^u$ be the resolution from Definition \ref{def:Morse-resolution} and let $x_1 \in \mathcal{L}_m^u \cap \overline{\mathcal{L}_\ell^u} \subset \overline{\mathcal{L}_\ell^u}$ with $y_1 \in C_m$ the corresponding critical point at the lower limit of the flow line $x_1$. Then 
\begin{equation*}
P_{Morse}^{-1}(x_1) = \widetilde{\mathcal{L}_\ell^{y_1}} .
\end{equation*}
\end{lemma}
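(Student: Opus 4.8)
The plan is to prove the identity by unwinding the combinatorial structure of broken flow lines recorded in \eqref{eqn:broken-flow-line} together with the definition of $P_{Morse}$ in Definition \ref{def:Morse-resolution}, and then to check that the resulting set-theoretic bijection respects the Austin--Braam topology on spaces of broken flow lines. The underlying idea is simply that fixing the top unbroken segment $x_1$ leaves all the break data below $C_m$ free, and that data is exactly what parametrises $\widetilde{\mathcal{L}_\ell^{y_1}}$.

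First I would recall that a point $x \in \widetilde{\mathcal{L}_\ell^u}$ is a tuple $x = \{x_1, x_2, \ldots, x_{k_x}\}$ determined by a strictly decreasing sequence of intermediate critical sets $C_{m_1}, \ldots, C_{m_{k_x}} = C_\ell$ and unbroken flow lines $x_1 \in \mathcal{L}_{m_1}^u$, $x_2 \in \mathcal{L}_{m_2}^{m_1}, \ldots, x_{k_x} \in \mathcal{L}_\ell^{m_{k_x-1}}$, subject to the matching condition that the lower limit $\lim_{t\to\infty} x_j \in C_{m_j}$ equals the upper limit of $x_{j+1}$ for each $j$. Since $P_{Morse}(x) = x_1$, the fibre $P_{Morse}^{-1}(x_1)$ consists exactly of those broken flow lines whose first segment is the fixed $x_1 \in \mathcal{L}_m^u$; in particular the first intermediate critical set must be $C_{m_1} = C_m$. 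The matching condition at the first break then forces the upper limit of $x_2$ to be precisely the lower limit $y_1 = \lim_{t\to\infty} x_1$ of $x_1$, so that the remaining data $\{x_2, \ldots, x_{k_x}\}$ is nothing other than a broken flow line with upper limit the fixed point $y_1 \in C_m$ and lower limit in $C_\ell$. Comparing with the definitions, this is exactly the defining data of a point of $\widetilde{\mathcal{L}_\ell^{y_1}}$, and deletion of the first segment $\{x_1, x_2, \ldots, x_{k_x}\} \mapsto \{x_2, \ldots, x_{k_x}\}$, with inverse given by prepending the fixed $x_1$, is the desired bijection. This is consistent on endpoints: $x_1$ alone is not an element of $\widetilde{\mathcal{L}_\ell^u}$ since $m > \ell$, and correspondingly $\widetilde{\mathcal{L}_\ell^{y_1}}$ contains the unbroken flow lines of $\mathcal{L}_\ell^{y_1}$, which match the two-segment broken flow lines $\{x_1, x_2\}$ lying over $x_1$.

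The step I expect to be the main obstacle is upgrading this bijection to a homeomorphism for the topology on broken flow lines of \cite[Sec.~2]{AustinBraam95}. Here I would argue that convergence of a sequence in $P_{Morse}^{-1}(x_1)$ is controlled entirely by the gluing and fraying data at the breaks occurring at $C_{m_2}, \ldots, C_{m_{k_x}}$, all of which lie strictly below $C_m$, while the initial segment is held fixed equal to $x_1$ throughout. Since these are precisely the breaks governing convergence in $\widetilde{\mathcal{L}_\ell^{y_1}}$, the two convergence criteria coincide and both the bijection and its inverse are continuous. This reduces the whole statement to the observation that holding the top unbroken segment fixed leaves the entire Austin--Braam gluing structure below $C_m$ untouched, which is the only point requiring care.
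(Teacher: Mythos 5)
Your proof is correct and follows essentially the same route as the paper, which disposes of the lemma in one sentence by observing that the fibre over $x_1$ consists precisely of the broken flow lines connecting $y_1 \in C_m$ to $C_\ell$, i.e.\ the points of $\widetilde{\mathcal{L}_\ell^{y_1}}$. Your additional verification that the segment-deletion bijection respects the Austin--Braam topology is a point the paper leaves implicit (it treats the identification as definitional), but it is consistent with and does not alter the argument.
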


\begin{proof}
The fibre $P_{Morse}^{-1}(x_1)$ consists of all broken flow lines connecting $y_1 \in C_m$ to $C_\ell$, which is precisely the resolution $\widetilde{\mathcal{L}_\ell^{y_1}}$.
\end{proof}

\subsection{Resolution of secant varieties}\label{subsec:secant-resolution}

In this section we recall the resolution of secant varieties defined by Bertram \cite{Bertram92} and then prove Theorem \ref{thm:Morse-resolution}, which relates this to the compactification by broken flow lines of the previous section.


Let $L \rightarrow X$ be a line bundle with $\deg L < 0$. Using Schwarzenberger's secant bundle construction \cite{Schwarzenberger64}, Bertram \cite{Bertram92} constructs a resolution of $\Sec_N^L(X) \subset \mathbb{P}H^1(L)$ by repeatedly blowing up $\mathbb{P}H^1(L)$ along the secant varieties of lower dimension. The precise statement we need is from \cite[Sec. 2]{Bertram92}, which is summarised in Lemma \ref{lem:secant-fibres} below, however first we recall the key parts of the construction (see also \cite[Sec. 3.3]{EinNiuPark20} for further explanation).

If $L^* \otimes K$ separates at least $k+1$ points, then let $B^k(L)$ be the secant bundle of $k$ planes associated to the line bundle $L^* \otimes K$ (cf. \cite[Sec. 1]{Bertram92}). Using $H^1(L) \cong H^0(L^* \otimes K)^*$, let $bl_1(\mathbb{P}H^1(L))$ denote the blowup of $\mathbb{P}H^1(L)$ along $X \subset \mathbb{P}H^1(L)$. Each secant variety is the image of $\beta_k : B^k(L) \rightarrow \mathbb{P}H^1(L)$, and $bl_1(B^k(L))$ is defined to be the blowup of $B^k(L)$ along $\beta_k^{-1}(X)$. Let $bl_1(\beta_k)$ be the unique lift of $\beta_k$ to a map
\begin{equation*}
\begin{tikzcd}
bl_1(B^k(L)) \arrow{r}{bl_1(\beta_k)} \arrow{d} & bl_1(\mathbb{P}H^1(L)) \arrow{d} \\
B^k(L) \arrow{r}{\beta_k} &  \mathbb{P}H^1(L) .
\end{tikzcd}
\end{equation*}
Now inductively continue the process. If $bl_n(\mathbb{P}H^1(L))$, $bl_n(B^k(L))$ and $bl_n(\beta_k)$ are defined for all $k \geq n$ and $bl_n(\beta_n)$ is injective, then (after identifying $bl_n(B^n(L))$ with its image), define 
\begin{enumerate}

\item $bl_{n+1}(\mathbb{P}H^1(L))$ to be the blowup of $bl_n(\mathbb{P}H^1(L))$ along $bl_n(B^n(L))$,

\item $bl_{n+1}(B^k(L))$ to be the blowup of $bl_n(B^k(L))$ along $bl_n(\beta_k)^{-1}(bl_n(B^n(L)))$, and

\item $bl_{n+1}(\beta_k)$ to be the unique lift of $bl_n(\beta_k)$ to a map 
\begin{equation*}
\begin{tikzcd}[column sep=5em]
bl_{n+1}(B^k(L)) \arrow{r}{bl_{n+1}(\beta_k)} \arrow{d} & bl_{n+1}(\mathbb{P}H^1(L)) \arrow{d} \\
bl_{n}(B^k(L)) \arrow{r}{bl_{n}(\beta_k)} & bl_{n}(\mathbb{P}H^1(L))
\end{tikzcd}
\end{equation*}

\end{enumerate}

There is a canonical projection map $bl_{n+1}(\mathbb{P}H^1(L)) \rightarrow \mathbb{P} H^1(L)$ given by composing the projections $bl_{n+1}(\mathbb{P}H^1(L)) \rightarrow bl_n(\mathbb{P}H^1(L))$ at each stage of this process. This construction hinges on the injectivity of $bl_n(\beta_n)$ at each step, which is proved in \cite[Prop. 2.3]{Bertram92}. 

Recall from Definition \ref{def:secant-bundle} that the global secant variety $\mathcal{P}_\ell^u$ (resp. the closure $\overline{\mathcal{P}_\ell^u}$) is defined as a fibre bundle over the critical sets, where the fibre over $[L_1 \oplus L_2, \phi]$ is $\Sec_{u-\ell, 0}^{L_1^*L_2}(X) \hookrightarrow \mathbb{P}H^1(L_1^*L_2)$ (resp. $\Sec_{u-\ell}^{L_1^*L_2}(X) \hookrightarrow \mathbb{P}H^1(L_1^*L_2)$). Applying the above construction to the bundle $\mathbb{P} W_u^-$ with fibres $\mathbb{P}H^1(L_1^*L_2)$ gives us the following definition.

\begin{definition}[Resolution of global secant variety]\label{def:global-secant-resolution}
The resolution
\begin{equation*}
P_{Sec}^{\ell,u} : \widetilde{\mathcal{P}_\ell^u} \rightarrow \overline{\mathcal{P}_\ell^u} 
\end{equation*}
is the $(u-\ell-1)^{th}$ blowup of $\overline{\mathcal{P}_\ell^u}$ with fibre over $[L_1\oplus L_2, \phi] \in C_u$ given by $bl_{u-\ell-1}(B^{u-\ell}(L_1^*L_2))$.
\end{definition}

\begin{remark}
Note that this resolution is only nontrivial when $u-\ell > 1$; for example, if $u-\ell = 1$ so that there are no intermediate critical sets between $C_u$ and $C_\ell$, then $\overline{\mathcal{P}_\ell^u} \cong \mathcal{P}_\ell^u$ is the fibre bundle over $C_u$ with fibres $X \subset \mathbb{P}H^1(L_1^*L_2)$, and so $\widetilde{\mathcal{P}_\ell^u} \cong \overline{\mathcal{P}_\ell^u}$.
\end{remark}

The superscript in $P_{Sec}^{\ell,u}$ will be dropped if themeaning is clear from the context. Now we can restate \cite[Cor. 2.5(b)]{Bertram92} in the form needed in the sequel.

\begin{lemma}\label{lem:secant-fibres}
Let $\ell < m < u$, let $P_{Sec} : \widetilde{\mathcal{P}_\ell^u} \rightarrow \overline{\mathcal{P}_\ell^u}$  be the resolution from \cite{Bertram92} and let $x_1 \in \mathcal{P}_m^u \subset \overline{\mathcal{P}_\ell^u}$ be a secant plane in $\mathbb{P}H^1(L_1^*L_2)$ corresponding to a divisor $D$ on $X$.  Then 
\begin{equation*}
P_{Sec}^{-1}(x_1) \cong bl_{u-\ell-1-\deg D}(\mathbb{P}H^1(L_1^*L_2[2D])) .
\end{equation*}
\end{lemma}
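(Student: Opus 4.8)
The plan is to deduce the statement directly from Bertram's description of the fibres of his iterated blowup, after translating his notation into the present setting. First I would fix the combinatorial data attached to $x_1$. Since $x_1 \in \mathcal{P}_m^u$, the point lies in the fibre $\Sec_{u-m}^{L_1^*L_2}(X) \subset \mathbb{P}H^1(L_1^*L_2)$ over the critical point $[L_1 \oplus L_2, \phi] \in C_u$, and by hypothesis it determines the secant plane $\Pi_D^{L_1^*L_2}$ associated to an effective divisor $D$ with $\deg D = u - m$ (uniquely, by Lemma \ref{lem:nondegenerate-secant-bound}). Thus the only thing to identify is the fibre of the iterated blowup tower over the generic point of this secant plane, and the argument reduces to a single-fibre computation carried out inside one copy of $\mathbb{P}H^1(L_1^*L_2)$.

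The second step is to invoke the precise content of Bertram's \cite[Cor. 2.5(b)]{Bertram92}. In the iterated construction recalled above, $bl_{k+1}(\mathbb{P}H^1(L_1^*L_2))$ is built from $\mathbb{P}H^1(L_1^*L_2)$ by successively blowing up the proper transforms of $X, \Sec_2^{L_1^*L_2}(X), \ldots$, and Bertram computes the fibre of the resulting map over a point lying on the secant plane of a divisor of degree $\deg D$. His corollary identifies this fibre with the corresponding iterated blowup tower of the linear projection of $\mathbb{P}H^1(L_1^*L_2)$ away from $\Pi_D^{L_1^*L_2}$, the number of remaining blowup stages being $k - \deg D$; the drop from $k+1$ records the stages consumed in reaching and resolving $\Pi_D^{L_1^*L_2}$, and I would simply cite his corollary for this count rather than rederive the bookkeeping.

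The key identification, and the step I expect to require the most care, is to see that the projection space is precisely $\mathbb{P}H^1(L_1^*L_2[2D])$. Projecting $\mathbb{P}H^1(L_1^*L_2) \cong \mathbb{P}H^0(L_2^*L_1 \otimes K)^*$ away from $\Pi_D^{L_1^*L_2}$ re-embeds $X$ by the sublinear system of sections vanishing along $D$, and the infinitesimal structure of the secant variety along $\Pi_D^{L_1^*L_2}$ forces this vanishing to occur to order $2$ at each point of $D$; this is the origin of the factor $2D$, and it matches the order-$2m_k$ vanishing of the Higgs field computed in Lemma \ref{lem:downwards-flow-criterion}. Concretely, I would make the identification via Serre duality together with the long exact sequence associated to $0 \to L_1^*L_2 \to L_1^*L_2[2D] \to \mathbb{C}_{2D} \to 0$: the bound $\deg D < \frac{1}{2}(\deg L_1 - \deg L_2)$ forces $h^0(L_1^*L_2[2D]) = 0$, so this sequence realises $H^1(L_1^*L_2[2D])$ as the quotient of $H^1(L_1^*L_2)$ by the span of the doubled divisor, which is exactly the target of Bertram's projection. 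Hence $P_{Sec}^{-1}(x_1) = bl_{k-\deg D}(\mathbb{P}H^1(L_1^*L_2[2D]))$.

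Finally, I would record the compatibility with the Morse-theoretic picture as a consistency check. Since $H^1(L_1^*L_2[2D]) = H^1\bigl((L_1[-D])^*(L_2[D])\bigr)$ is exactly the unstable manifold of the intermediate critical point $[L_1[-D] \oplus L_2[D], \phi_\infty] \in C_m$ produced in Lemma \ref{lem:downwards-flow-criterion}, the present lemma is the secant-variety analogue of the fibre computation for the Morse resolution in Lemma \ref{lem:Morse-fibres}. Confirming that the two fibres are the iterated blowup tower of the same projective space not only pins down the factor $2D$ but also gives an independent verification of the index $k - \deg D$, and it is precisely this matching that will be needed to establish the commutativity in Theorem \ref{thm:Morse-resolution}.
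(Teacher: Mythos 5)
Your proposal is correct and takes essentially the same route as the paper: the paper offers no independent proof of this lemma, presenting it explicitly as a restatement of Bertram's Cor.~2.5(b) in the notation of the global secant variety, which is exactly what you do, and your exact-sequence identification of $H^1(L_1^*L_2[2D])$ as a quotient of $H^1(L_1^*L_2)$ supplies the translation step that the paper leaves implicit. One imprecision worth fixing: the centre of the relevant linear projection is not $\Pi_D^{L_1^*L_2}$ but the span of the osculating planes along $D$, namely $\Pi_{2D}^{L_1^*L_2} \cong \mathbb{P}^{2\deg D - 1}$; projecting away from $\Pi_D$ alone would land in $\mathbb{P}H^1\bigl((L_1[-D])^*L_2\bigr)$, which has the wrong dimension, and your own exact sequence --- whose kernel $H^0(\mathbb{C}_{2D}) \cong \mathbb{C}^{2\deg D}$ is precisely the cone over $\Pi_{2D}$ --- already forces the larger centre.
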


Inductively applying Lemma \ref{lem:secant-fibres} to the fibres of 
\begin{equation*}
P_{Sec} : bl_{u-\ell-1-\deg D}(H^1(L_1^*L_2[2D])) \rightarrow \mathbb{P} H^1(L_1^*L_2[2D])
\end{equation*}
shows that each point $s \in bl_{u-\ell-1}(\mathbb{P} H^1(L_1^*L_2))$ corresponds to a sequence of points in secant planes
\begin{align}\label{eqn:broken-secant-plane}
\begin{split}
s_1 \in \Sec_{k_1, 0}^{L_1^*L_2}  \subset \mathbb{P}H^1(L_1^*L_2) & \quad \text{corresponding to a divisor $D_1$ of degree $k_1$} \\
s_2 \in \Sec_{k_2, 0}^{L_1^*L_2[2D_1]} \subset \mathbb{P}H^1(L_1^*L_2[2D_1]) & \quad \text{corresponding to a divisor $D_2$ of degree $k_2$} \\
s_3 \in \Sec_{k_3, 0}^{L_1^*L_2[2D_1+2D_2]} \subset \mathbb{P}H^1(L_1^*L_2[2D_1+2D_2]) & \quad \text{corresponding to a divisor $D_3$ of degree $k_3$} \\
\vdots \quad & \quad \vdots \\
s_{n_s} \in \Sec_{k_s, 0}^{L_1^*L_2[2D]} \subset \mathbb{P}H^1(L_1^*L_2[2D]) & \quad \text{corresponding to a divisor $D_{n_s}$ of degree $k_s$} ,
\end{split}
\end{align}
where $D = D_1 + \cdots + D_{n_s-1}$ is used to simplify the notation in the last line. In the sequel, $s \in bl_{u-\ell-1}(\mathbb{P}H^1(L_1^*L_2))$ of the above form will be denoted $s = \{ s_1, \ldots, s_{n_s} \} \in bl_{u-\ell-1} (\mathbb{P}H^1(L_1^*L_2))$.


We can now define a map relating the Morse resolution and the resolution of secant varieties. Recall the circle bundle $g : \mathcal{L}_\ell^u \rightarrow \mathcal{P}_\ell^u$ from Theorem \ref{thm:space-unbroken-flow-lines} that takes an unbroken flow line $x \in \mathcal{L}_\ell^y$ to the corresponding point in $\Sec_{u-\ell, 0}^{L_1^*L_2} \subset \mathbb{P}H^1(L_1^*L_2)$. The following definition extends this map to the space of broken flow lines.

\begin{definition}\label{def:resolution-map}
Let $C_\ell$ and $C_u$ be critical sets with $\ell < u$ and let $y \in C_u$. Define $G : \widetilde{\mathcal{L}_\ell^y} \rightarrow bl_{u-\ell-1}(\mathbb{P}H^1(L_1^*L_2))$ by
\begin{equation*}
G(\{x_1, \ldots, x_n\}) = \{ g(x_1), \ldots, g(x_n) \} .
\end{equation*}
\end{definition}

Since the fibres of $g$ are orbits of the circle action, then we have the following result about the fibres of $G$.

\begin{lemma}
Let $[L_1 \oplus L_2, \phi] \in C_u$ be a critical point and let $\{s_1, \ldots, s_n\} \in bl_{u-\ell-1}(\mathbb{P}H^1(L_1^*L_2))$. Then $G^{-1}(\{s_1, \ldots, s_n\}) \cong (S^1)^n$.
\end{lemma}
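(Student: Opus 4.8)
The plan is to argue by induction on the length $n$ of the broken flow line, exploiting the fact that the Morse resolution and Bertram's resolution are built from compatible fibrations, as recorded in Lemma \ref{lem:Morse-fibres} and Lemma \ref{lem:secant-fibres}. First I would observe that the fixed target point $\{s_1, \ldots, s_n\}$ rigidifies all of the combinatorial data: by the description \eqref{eqn:broken-secant-plane} the entry $s_1$ lies in a secant variety $\Sec_{k_1,0}^{L_1^*L_2} \subset \mathbb{P}H^1(L_1^*L_2)$ and so determines a unique effective divisor $D_1$, while $s_2 \in \mathbb{P}H^1(L_1^*L_2[2D_1])$, and so on. Since the ambient projective spaces and the divisors $D_1, \ldots, D_n$ depend only on the holomorphic extension-class data, and the limiting critical points are then pinned down by Lemma \ref{lem:downwards-flow-criterion}, the entire chain of intermediate critical points is determined by $\{s_1, \ldots, s_n\}$ before any phase is chosen.

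The base case $n=1$ is exactly Theorem \ref{thm:space-unbroken-flow-lines}, where $G = g$ and $g^{-1}(s_1) \cong S^1$. For the inductive step I would restrict the Morse resolution $P_{Morse}$ to $G^{-1}(\{s_1, \ldots, s_n\})$, obtaining via Definition \ref{def:Morse-resolution} a projection onto $g^{-1}(s_1) \cong S^1$ sending $\{x_1, \ldots, x_n\} \mapsto x_1$. By Lemma \ref{lem:Morse-fibres} the fibre of $P_{Morse}$ over a point $x_1$, with lower limit $y_1 \in C_{m_1}$, is the resolution $\widetilde{\mathcal{L}_\ell^{y_1}}$, and by Definition \ref{def:resolution-map} the map $G$ restricts there to the resolution map for the shorter chain. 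Lemma \ref{lem:secant-fibres} identifies the residual secant data as the point $\{s_2, \ldots, s_n\} \in bl_{k - \deg D_1}(\mathbb{P}H^1(L_1^*L_2[2D_1]))$, so the inductive hypothesis gives that this fibre is homeomorphic to $(S^1)^{n-1}$. Thus $G^{-1}(\{s_1, \ldots, s_n\})$ is the total space of a fibre bundle over $g^{-1}(s_1) \cong S^1$ with fibre $(S^1)^{n-1}$.

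The main obstacle is to show that this bundle over the circle is trivial, rather than a potentially twisted iterated $S^1$-bundle. The key structural input I would use is that the nonminimal critical points are fixed points of the $\mathbb{C}^*$ action, so every critical point $y$ satisfies $e^{i\theta} \cdot y = y$; consequently the globally defined $S^1$ action $e^{i\theta} \cdot [E,\phi] = [E, e^{i\theta}\phi]$ preserves each space $\widetilde{\mathcal{L}_\ell^{y}}$, acting on a broken flow line by rotating all of its segments simultaneously. Because the fibres of $g$ are precisely its $S^1$-orbits (Theorem \ref{thm:space-unbroken-flow-lines}), the map $g$, hence $G$, is invariant under this action, so the $S^1$ action preserves $G^{-1}(\{s_1, \ldots, s_n\})$ and makes $P_{Morse}$ restricted to it an $S^1$-equivariant map. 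Since the $S^1$ action on the base circle $g^{-1}(s_1)$ is free and transitive, and the lower limit $y_1$ is the same $S^1$-fixed critical point for every $x_1$ in this orbit, the map $(\theta, w) \mapsto e^{i\theta} \cdot w$ from $S^1$ times the fibre over a fixed basepoint is a homeomorphism onto $G^{-1}(\{s_1, \ldots, s_n\})$. This trivialises the bundle, giving $G^{-1}(\{s_1, \ldots, s_n\}) \cong S^1 \times (S^1)^{n-1} = (S^1)^n$ and completing the induction.
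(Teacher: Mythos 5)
Your proof is correct, but it takes a noticeably longer route than the paper's, which is a two-line argument: by Definition \ref{def:resolution-map}, $G^{-1}(\{s_1,\ldots,s_n\})$ is exactly the set of tuples $(x_1,\ldots,x_n) \in \widetilde{\mathcal{L}_\ell^y}$ with $g(x_i)=s_i$ for every $i$, and this set is the Cartesian product $g^{-1}(s_1)\times\cdots\times g^{-1}(s_n)\cong (S^1)^n$, since each fibre of $g$ is a circle by Theorem \ref{thm:space-unbroken-flow-lines}. The fact that makes this product decomposition legitimate is precisely the one you isolate in your final paragraph: the intermediate critical points are fixed by the $S^1$ action, so every flow line in a given circle fibre $g^{-1}(s_i)$ has the same upper and lower endpoints, the matching conditions defining a broken flow line are therefore insensitive to rotating the segments independently of one another, and the constraints on the $n$ segments decouple completely. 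You exploit this decoupling only for the diagonal circle, in order to trivialise the bundle over $g^{-1}(s_1)$ produced by your induction (via Lemmas \ref{lem:Morse-fibres} and \ref{lem:secant-fibres}); applied to all $n$ rotations independently it gives a simply transitive action of $(S^1)^n$ on the fibre of $G$ --- which is the paper's product --- with no induction and no monodromy discussion needed. What your version buys is transparency on exactly the point the paper leaves implicit, namely why the iterated circle bundle cannot be twisted, together with an inductive structure that mirrors the recursive description of both resolutions; what it costs is the extra scaffolding (the fibration over $S^1$, equivariance of $P_{Morse}$, and compactness to upgrade your continuous bijection $(\theta,w)\mapsto e^{i\theta}\cdot w$ to a homeomorphism) that the direct product observation avoids entirely.
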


\begin{proof}
By definition, we have
\begin{equation*}
G^{-1}(\{s_1, \ldots, s_n\}) = \{ (x_1, \ldots, x_n) \in \widetilde{\mathcal{L}_\ell^y} \, \mid \, g(x_i) = s_i,\,  i = 1, \ldots, n \} ,
\end{equation*}
and so $G^{-1}(\{s_1, \ldots, s_n\})$ is a Cartesian product 
\begin{equation*}
g^{-1}(s_1) \times \cdots \times g^{-1}(s_n) \cong (S^1)^n . \qedhere
\end{equation*}
\end{proof}

In particular, we see that the subset of broken flow lines with $n-1$ intermediate critical points has a canonical $(S^1)^n$ action induced from the $S^1$ action on $\mathcal{M}_{Higgs}^{ss}(E)$.

Now we can prove Theorem \ref{thm:Morse-resolution}, which relates the compactification by broken flow lines to the resolution of secant varieties.

\begin{proof}[Proof of Theorem \ref{thm:Morse-resolution}]
Proving that the diagram \eqref{eqn:relate-resolutions} commutes reduces to simply writing down the maps using the above definitions. We have
\begin{equation*}
P_{Sec} \circ G(\{x_1, \ldots, x_n\}) = P_{Sec}(\{ g(x_1), \ldots, g(x_n) \}) = g(x_1)
\end{equation*}
and
\begin{equation*}
g \circ P_{Morse}(\{x_1, \ldots, x_n\}) = g(x_1) ,
\end{equation*}
which completes the proof.
\end{proof}




\end{document}